\pgfplotsset{compat=1.18}
\title[]{Weighted mixed inequalities for commutators of Schrödinger type operators}
\author{}
\date{}
\theoremstyle{plain}
   \newtheorem{teo}{Theorem}
   \newtheorem{lema}[teo]{Lemma}
   \newtheorem{propo}[teo]{Proposition}
\theoremstyle{definition}
\theoremstyle{remark}
 \newtheorem{obs}{Remark}
 \newtheorem{afirmacion}{Claim}
\numberwithin{equation}{section}
\numberwithin{teo}{section}
\definecolor{aquamarine}{rgb}{0.5, 1.0, 0.83}
\definecolor{americanrose}{rgb}{1.0, 0.01, 0.24}
\definecolor{arsenic}{rgb}{0.23, 0.27, 0.29}
\definecolor{blizzardblue}{rgb}{0.67, 0.9, 0.93}
\definecolor{blush}{rgb}{0.87, 0.36, 0.51}
\definecolor{celestialblue}{rgb}{0.29, 0.59, 0.82}
\definecolor{chocolate(web)}{rgb}{0.82, 0.41, 0.12}
\definecolor{brightpink}{rgb}{1,0,0.5}
\definecolor{cadmiunred}{rgb}{0.89,0,0.13}
\newcounter{BPR}
\begin{document}

	\author[F. Berra]{Fabio Berra}
	\address{Fabio Berra, CONICET and Departamento de Matem\'{a}tica (FIQ-UNL),  Santa Fe, Argentina.}
	\email{fberra@santafe-conicet.gov.ar}
	
	\author[G. Pradolini]{Gladis Pradolini}
	\address{Gladis Pradolini, CONICET and Departamento de Matem\'{a}tica (FIQ-UNL),  Santa Fe, Argentina.}
	\email{gladis.pradolini@gmail.com}
%
	\author[J. Recchi]{Jorgelina Recchi}
	\address{Jorgelina Recchi, Departamento de Matemática, Universidad Nacional del Sur (UNS), Instituto de Matematica (INMABB), Universidad Nacional del Sur-CONICET,  Bahía Blanca, Argentina.}
	\email{jrecchi@gmail.com}
	
	\thanks{The author were supported by CONICET, UNL, ANPCyT and UNS}
	
	\subjclass[2010]{35J10, 42B20}
	
	\keywords{Schrödinger operators, critical radious function, commutators, weights}

\begin{abstract}	
We obtain weighted mixed inequalities for the first order commutator of singular integral operators in the Schrödinger setting. Concretely, for $0<\delta\leq 1$ we give estimates of commutators of Schrödinger-Calderón-Zygmund operators of $(s,\delta)$ type with $1<s\leq \infty$, and $\text{BMO}(\rho)$ symbols associated to a critical radious function $\rho$.

Our results generalizes some previous estimates about mixed inequalities for Schrödinger type operators. We also deal with $A_p^\rho$ weights,  which can be understood as a perturbation of the $A_p$ Muckenhoupt classes by means of function $\rho$.
\end{abstract}

\maketitle


\section{Introduction}

One of the most relevant problems in Harmonic Analysis is to determine the continuity properties of certain maximal operators that control, in some sense, a wide variety of operators of integral type. For example, the Hardy-Littlewood maximal function, $M$, moderates the behavior of Calderón-Zygmund operators by controlling the norm of the latter with different measures. See for example \cite{Coifman}, \cite{Coifman-Fefferman-74}.

It is well known that $M$ is bounded in $L^{p}(w)$ if $w\in A_p$, the Muckenhoupt classes of weights, which is defined by
\begin{equation*}
\sup_{Q\subset \mathbb{R}^n}\left(\frac{1}{|Q|}\int_Q w\right)^{1/p}\left(\frac{1}{|Q|}\int_Q w^{1-p'}\right)^{1/p'}<\infty,
\end{equation*} 
where the supremum is taken over the cubes in $\mathbb{R}^n$ with sides parallel to the coordinate axes. The proof of this result involves an interpolation argument combined with the endpoint behavior of $M$ when $p=1$ (\cite{Muck72}).

Motivated in giving an alternative proof of the continuity properties of $M$ in $L^p(w)$, in \cite{Sawyer} E. Sawyer proved an inequality on the real line involving $A_1$ weights and certain operator, which is a perturbation of $M$. Concretely, this inequality establishes that, if $u$, $v\in A_1$  then the following estimate holds
\begin{equation}\label{mixta Sawyer}
uv\left(\left\{x\in\mathbb{R}: \frac{M(fv)(x)}{v(x)}>\lambda\right\}\right)\leq \frac{C}{\lambda}\int_{\mathbb{R}}|f(x)|\,u(x)v(x)\,dx.
\end{equation}
We shall refer to this type of estimates as weighted mixed inequalities and they can be seen as a generalization of the weak $(1,1)$ type of $M$, which corresponds to $v=1$. It can also be identified as the weak $(1,1)$ type of the operator $ S_v(f)=M(fv)/v$ with measure $uv$. In the same article, the author also conjectured that a similar estimate should still hold for the Hilbert transform, $H$. 

In \cite{CruzUribe-Martell-Perez} the authors proved Sawyer's conjecture not only for $H$ but also for Calderón-Zygmund operators (CZO) and the Hardy-Littlewood maximal function in $\mathbb{R}^n$. They also included similar inequalities for $u\in A_1$ and $v\in A_\infty(u)$. Under these assumptions we have that the product $uv\in A_\infty$ and so the Calderón-Zygmund decomposition can be performed in order to obtain the corresponding result for $M_{\mathcal{D}}$, the dyadic Hardy-Littlewood maximal operator. Then, an extrapolation result gave the corresponding estimate for $M$ and CZO.

Inspired in the techniques developed in \cite{PP01}, in \cite{BCP-M} the authors gave an alternative proof of the results in \cite{CruzUribe-Martell-Perez} for CZO and their commutators with BMO symbols, for the case $u\in A_1$ and $v\in A_\infty(u)$. These hypotheses allowed to apply the Calderón-Zygmund decomposition with respect to the measure $v(x)\, dx$ and to obtain a direct proof of the mentioned result. In this direction, mixed type inequalities were also explored in, for example, \cite{B19}, \cite{B24}, \cite{BCP-JMAA}, \cite{BCP-MN}, \cite{BCP-Coll},    \cite{LOP}.

On the Schrödinger setting, in \cite{shen} the author studied $L^p$--continuity of the operator $T=-\Delta +V$, where $V$ is a nonnegative potential satisfying certain reverse Hölder condition. Later on, in \cite{BHS11JMAA} the authors proved $L^p(w)$ boundedness properties for the Riesz-Schrödinger transform, introducing the classes of weights associated in this setting, which generalize the classical Muckenhoupt $A_p$ weights, preserving many properties of them in this context,  but with subtle modifications. The development of this theory continued in \cite{BCH13}, where weighted inequalities for a wide family of singular integral operators, which are extension of classical CZO, were established.      

On the other hand, weighted weak type estimates for Schrödinger singular integral operators were studied in \cite{BCH16}. Moreover, in \cite{BHS11} a modular weak type inequality involving the Young function $\varphi(t)=t(1+\log^+ t)$ for the commutator of the Riesz transform was obtained. 

A first approach in obtaining weighted mixed inequalities in the Schrödinger setting was developed in \cite{BPQ24} where the authors (particularly, the first two of them in the present article), proved estimates in the spirit of \eqref{mixta Sawyer} for singular integral operators with kernels satisfying different size and regularity conditions associated to a critical radius function $\rho$. Concretely, the estimates involve perturbations of Schrödinger-Calderón-Zygmund of $(\infty,\delta)$ and $(s,\delta)$ type (see the definition below). The conditions on the weights appearing in these inequalities are of the type $u\in A_1^\rho$ and $v\in A_\infty^\rho(u)$ for the former with a slight modification for the latter. More general mixed inequalities for these operators and the Hardy-Littlewood maximal operator in this new setting was recently settled in \cite{BPQ24(2)}.

In this article we consider commutators of Schrödinger-Calderón-Zygmund of $(\infty,\delta)$ and $(s,\delta)$ type with symbols belonging to BMO classes related to a critical radius function $\rho$. Our techniques rely upon a generalized Calderón-Zygmund decomposition in this framework respect to a certain measure, combined with the decay properties of the function $\rho$, extending those obtained in \cite{BCP-M}. A generalization of many well known properties that hold in the classical context is required in this new environment, and non trivial adaptations are necessary in order to obtain the main results.

In order to state our main results we give some preliminaries and definitions.

Throughout the article we shall be dealing with a \emph{critical radious function} $\rho\colon \mathbb{R}^n\to (0,\infty)$, that is, a function $\rho$ for which there exist two constants $C_0,N_0\geq 1$ such that the inequality
\begin{equation} \label{eq: constantesRho}
	C_0^{-1}\rho(x) \left(1+ \frac{|x-y|}{\rho(x)}\right)^{-N_0}
	\leq \rho(y)
	\leq C_0 \,\rho(x) \left(1+ \frac{|x-y|}{\rho(x)}\right)^{\tfrac{N_0}{N_0+1}}
	\end{equation}
	holds for every $x,y\in\mathbb{R}^n$.

 Given $0<\delta\leq1$ we say that a linear operator $T$ is a  \emph{Schr\"odinger-Calder\'on-Zygmund operator of $(\infty,\delta)$ type }($(\infty,\delta)-SCZO$) if
\begin{enumerate}[\rm(I)]
	\item $T$ is bounded from $L^1$ into $L^{1,\infty}$;
	\item $T$ has an associated kernel $K\colon\mathbb{R}^n\times\mathbb{R}^n\rightarrow\mathbb{R}$, such that the representation
	\begin{equation}\label{eq: representacion integral de T}
	Tf(x)=\int_{\mathbb{R}^n} K(x,y)f(y)\,dy
	\end{equation}
 holds for $f\in L_c^{\infty}$ and almost every $x\notin \text{supp}(f)$;
	\item
	for every $N>0$ there exists a constant $C_N$ such that
	\begin{equation}\label{eq: condicion de tamaño (inf,delta)}
	|K(x,y)| \leq
	\frac{C_N}{|x-y|^{n}} \left(1+ \frac{|x-y|}{\rho(x)}\right)^{-N},\,\,\, x\neq y, 
	\end{equation}		
	and there exists $C$ such that
	\begin{equation}\label{eq: condicion de suavidad (inf,delta)}
	|K(x,y)-K(x,y_0)|
	\leq C \frac{|y-y_0|^{\delta}}{|x-y|^{n+\delta}},\,\,\,\text{when}\,\,
	|x-y|>2|y-y_0|.
	\end{equation}
\end{enumerate}

We shall also consider a wider class of operators with kernels satisfying another type of regularity. For $1<s<\infty$ and $0<\delta\leq1$, we shall say that a linear operator $T$ is a  \emph{Schr\"odinger-Calder\'on-Zygmund operator of $(s,\delta)$ type} ($(s,\delta)-SCZO$) if
\begin{enumerate}
	\item[($\rm{I}_s$)]\label{tipodebil} $T$ is bounded on $L^{p}$ for $1<p<s$. 
	\item[($\rm{II}_s$)] $T$ has an associated kernel $K:\mathbb{R}^n\times\mathbb{R}^n\rightarrow\mathbb{R}$, in the sense that
	\begin{equation*}
	Tf(x)=\int_{\mathbb{R}^n} K(x,y)f(y)\,dy,\,\,\,\,
	f\in L_c^{s'} \,\,\text{and}\,\,x\notin \text{supp}f.
	\end{equation*}
\item[($\rm{III}_s$)]	Further, 
	for each $N>0$ there exists a positive constant $C_N$ such that
	\begin{equation}\label{TamHorm}
	\left( \int_{R<|x_0-x|<2R} |K(x,y)|^{s}\,dx \right)^{1/s}\\ \leq C_N R^{-n/s'} \left(1+\frac{R}{\rho(x_0)}\right)^{-N},
	\end{equation}
	for  $|y-x_0|<R/2$, and
	there exists a positive constant $C$ such that
	\begin{equation}\label{suav-horm}
	\left( \int_{R<|x-y_0|<2R} |K(x,y)-K(x,y_0)|^{s}\,dx\right)^{1/s} \\
	\leq C R^{-n/s'} \left(\frac{r}{R}\right)^{\delta},
	\end{equation}
	for $|y-y_0|<r\leq \rho (y_0)$,  $r<R/2$.
\end{enumerate}

\medskip 

Given $b\in L^1_{\text{loc}}$ and a linear operator $T$, the \emph{first order commutator} $T_b$ of $T$ is formally defined by
\[T_bf=[b,T]f=bTf-T(bf).\]

Given $\theta\geq 0$ and a critical radius function $\rho$, the \emph{bounded mean oscillation space}  $\text{BMO}_\theta(\rho)$ is defined as the collection of locally integrable functions $f$ for which there exists $C>0$ such that the inequality
\[\frac{1}{|Q|}\int_Q |f-f_Q|\leq C\left(1+\frac{r}{\rho(x)}\right)^{\theta}\]
holds for every cube $Q=Q(x,r)$. The smallest constant $C$ satisfying the inequality above is denoted by $\|f\|_{\text{BMO}_\theta(\rho)}$. The space $\text{BMO}(\rho)$ shall be understood as the collection of the $\text{BMO}_\theta(\rho)$ spaces for $\theta\geq 0$, that is, 
\[\text{BMO}(\rho)=\bigcup_{\theta\geq 0}\text{BMO}_\theta(\rho).\]

\medskip

We are now in a position to state our main results. For the definitions of the classes of weights see the next section. Throughout the article we denote $\Phi(t)=t(1+\log^+t)$.

\begin{teo}\label{teo: mixta para SCZO inf,delta}
Let $u\in A_1^{\rho}$ and $v\in A_\infty^\rho(u)$. Given $0<\delta\leq 1$, $T$ a $(\infty,\delta)-SCZO$ and $b\in \text{BMO}(\rho)$, the inequality
\[uv\left(\left\{x\in\mathbb{R}^n: \frac{|T_b(fv)(x)|}{v(x)}>\lambda\right\}\right)\leq C\int_{\mathbb{R}^n}\Phi\left(\frac{|f(x)|}{\lambda}\right)u(x)v(x)\,dx\]
holds for every positive $\lambda$.
\end{teo}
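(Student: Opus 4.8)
The plan is to follow the strategy of \cite{BCP-M}, adapted to the Schrödinger framework, performing a Calderón–Zygmund decomposition with respect to the measure $v(x)\,dx$ rather than Lebesgue measure. By homogeneity we may assume $\lambda=1$. Since $u\in A_1^\rho$ and $v\in A_\infty^\rho(u)$, the product $uv$ lies in $A_\infty^\rho$, and $v\,dx$ is a doubling measure for which the relevant covering/decomposition lemmas (generalized to the critical-radius setting) are available from the cited works. Fix a scale related to $\rho$: decompose the level set at height $1$ for $M_v f := M(fv)/v$ (or more precisely the natural dyadic analogue with respect to $v$) into a family of maximal dyadic cubes $\{Q_j\}$, and split $f = g + h$ with $g$ the ``good'' part bounded pointwise by a constant and $h = \sum_j h_j$ the ``bad'' part, where each $h_j$ is supported on $Q_j$ and has $v$-mean zero on $Q_j$. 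The contribution of $g$ is handled by an $L^2(v)$-type estimate using the boundedness of $T_b$ together with the $A_\infty^\rho$ property; here one uses that $T$ is an $(\infty,\delta)$-SCZO and $b\in\text{BMO}_\theta(\rho)$, so weighted strong-type bounds for the commutator (a consequence of the theory in \cite{BCH13}, \cite{BHS11}) apply.

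For the bad part, split each cube according to whether its side length is comparable to, or much smaller than, $\rho$ evaluated at (say) its center. For the ``small'' cubes the operator essentially behaves like a classical CZO commutator, and one estimates $|T_b(h_j v)|/v$ off $\tilde Q_j$ (a fixed dilate) using the smoothness condition \eqref{eq: condicion de suavidad (inf,delta)} and the cancellation of $h_j$, but now the presence of $b$ forces one to write $b - b_{Q_j}$ and control it: this produces the $L\log L$ Orlicz averages, which is exactly the reason the right-hand side features $\Phi(t)=t(1+\log^+ t)$ rather than $t$. The term involving $(b-b_{Q_j})$ acting before $T$ is absorbed into a maximal-type operator associated with $\Phi$ and the measure $v$, whose weak-type $(1,1)$ with weight $uv$ (with the $\Phi$ modular on the right) follows from the $A_1^\rho$ hypothesis on $u$. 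The ``large'' cubes (side comparable to $\rho$) are fewer in a controlled sense and can be treated using the size estimate \eqref{eq: condicion de tamaño (inf,delta)} with a large exponent $N$, which supplies the decay needed to sum over such cubes; this is where the decay properties of $\rho$ (and the $\text{BMO}_\theta(\rho)$ bound on $b$, absorbing the polynomial factor $(1+r/\rho)^\theta$) enter crucially.

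The remaining piece is the exterior estimate for the union $\Omega = \bigcup_j \tilde Q_j$: one shows $uv(\Omega) \lesssim \int \Phi(|f|) uv$ directly from the construction of the $Q_j$ and the fact that $uv$ is doubling, using that on each $Q_j$ the $v$-average of $|f|$ exceeds $1$. Combining the estimate on $\mathbb{R}^n\setminus\Omega$ for $g$, the estimate on $\mathbb{R}^n\setminus\Omega$ for $h$, and the bound for $uv(\Omega)$ yields the theorem.

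I expect the main obstacle to be the treatment of the bad part in the critical-radius setting: one must simultaneously (i) handle the non-cancellation introduced by the symbol $b$, which is what upgrades the estimate from linear to $L\log L$, and (ii) control the interplay between the dyadic cubes $Q_j$ and the critical radius $\rho$, since $h_j$ only has cancellation with respect to $v\,dx$ and the kernel estimates degrade (or improve, for large cubes) depending on the ratio $\ell(Q_j)/\rho$. Making the splitting into ``small'' and ``large'' cubes quantitative, and checking that the sum over large cubes converges with the help of the $\rho$-decay in \eqref{eq: condicion de tamaño (inf,delta)} while the $\text{BMO}_\theta(\rho)$ factor is absorbed, is the technically delicate step; the classical arguments of \cite{BCP-M} must be reworked rather than quoted. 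A secondary difficulty is verifying that the auxiliary maximal operator controlling the $(b-b_{Q_j})$-term satisfies the required weak-type inequality with respect to $uv$ with the $\Phi$-modular, which should follow from $u\in A_1^\rho$ together with the generalized John–Nirenberg inequality for $\text{BMO}(\rho)$.
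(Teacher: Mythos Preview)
Your outline is correct and matches the paper's strategy almost step for step: Calder\'on--Zygmund decomposition of $|f|$ with respect to $v\,dx$ (using the $\rho$-adapted version with an auxiliary parameter $\theta$ to be chosen large), the subcritical/supercritical splitting of the selected cubes, the strong-type bound for $T_b$ on the good part, cancellation plus the enhanced smoothness estimate on subcritical bad cubes, and the kernel size decay \eqref{eq: condicion de tamaño (inf,delta)} to handle the supercritical cubes (which have no cancellation). The one tactical point where the paper differs from your sketch is the term $T\bigl((b-b_{Q_j})h_j v\bigr)$: rather than invoking an auxiliary $\Phi$-maximal operator, the paper applies directly the mixed weak-type inequality for $T$ already established in \cite{BPQ24}, which reduces this piece to $\lambda^{-1}\sum_j\int_{Q_j}|b-b_{Q_j}|\,|h_j|\,uv$, and then obtains the $\Phi$-modular via the generalized H\"older inequality with $\Phi$ and $\tilde\Phi$ on each $Q_j$ together with the $\mathrm{BMO}(\rho)$ control of $\|b-b_{Q_j}\|_{\tilde\Phi,Q_j,uv}$; similarly, for the good part the paper uses $L^q(uv^{1-q})$ with $uv^{1-q}\in A_q^\rho$ rather than an $L^2(v)$ estimate.
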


\begin{teo}\label{teo: mixta para SCZO s,delta}
Let $1<s<\infty$, $0<\delta\leq 1$, and $u,v$ weights such that  $u^{s'}\in A_1^{\rho}$ and $v\in A_\infty^\rho(u^\beta)$, for some $\beta>s'$. If $T$ is a $(s,\delta)-SCZO$ and $b\in \text{BMO}(\rho)$ then the inequality
\[uv\left(\left\{x\in\mathbb{R}^n: \frac{|T_b(fv)(x)|}{v(x)}>\lambda\right\}\right)\leq C\int_{\mathbb{R}^n}\Phi\left(\frac{|f(x)|}{\lambda}\right)u(x)v(x)\,dx\]
holds for every positive $\lambda$.
\end{teo}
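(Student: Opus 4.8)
The plan is to follow the scheme of the proof of Theorem~\ref{teo: mixta para SCZO inf,delta}, replacing each pointwise bound on the kernel by its $L^s$-averaged counterpart \eqref{TamHorm}--\eqref{suav-horm} and paying for this with Hölder's inequality with exponent $s'$ on the weight side; this is exactly why the hypothesis is $u^{s'}\in A_1^\rho$ rather than $u\in A_1^\rho$, and why $v\in A_\infty^\rho(u^\beta)$ is required with room $\beta>s'$. By homogeneity we take $\lambda=1$, and by density we may assume $f\ge 0$ is bounded with compact support. Exploiting the decay built into \eqref{eq: constantesRho} we split the analysis along the covering of $\mathbb{R}^n$ by critical balls with bounded overlap: the global part is disposed of by crude estimates carrying good $\rho$-decay, so that in the main local part every factor $(1+r/\rho)^{\theta}$ is $O(1)$. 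We then perform a Calderón--Zygmund decomposition of $f$ at height $1$ adapted to $v\,dx$ in the spirit of \cite{BCP-M}, obtaining pairwise disjoint dyadic cubes $\{Q_j\}$ with $1<v(Q_j)^{-1}\int_{Q_j}fv\le C$ and $f\le 1$ a.e.\ off $\Omega:=\bigcup_j Q_j$. Setting $\tilde\Omega:=\bigcup_j 2Q_j$ and $f=g+h$ with $h=\sum_j h_j$, $h_j:=(f-\langle f\rangle^v_{Q_j})\chi_{Q_j}$, one has $\|g\|_\infty\lesssim 1$, $g\ge 0$, and $\int h_j\,v=0$, so that $h_jv$ has vanishing Lebesgue integral. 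It then suffices to estimate each of
\[
uv(\tilde\Omega),\qquad uv\big(\{|T_b(gv)|>\tfrac14 v\}\big),\qquad uv\big(\{x\notin\tilde\Omega:\ |T_b(hv)(x)|>\tfrac14 v(x)\}\big)
\]
by a constant times $\int_{\mathbb{R}^n}\Phi(|f|)\,uv$.

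For the first, $uv$ is locally doubling (a consequence of $u^{s'}\in A_1^\rho$ and $v\in A_\infty^\rho(u^\beta)$), so $uv(\tilde\Omega)\lesssim\sum_j uv(Q_j)$, and the stopping condition $\langle f\rangle^v_{Q_j}>1$ combined with the $A_1^\rho$ behaviour of $u$ yields $uv(Q_j)\lesssim\int_{Q_j}\Phi(|f|)\,uv$; summing over the disjoint $Q_j$ closes this term. For the good part, since $g\le f\chi_{\Omega^c}+C\chi_\Omega$ and $\|g\|_\infty\lesssim 1$, Chebyshev at a suitable exponent $q$ together with the boundedness of $T_b$ on $L^q(uv^{1-q})$ reduces the estimate to $\int_{\Omega^c}f\,uv+uv(\Omega)\lesssim\int\Phi(|f|)\,uv$, where the last bound uses $f\le 1$ off $\Omega$ and the already-treated term $uv(\tilde\Omega)$. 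Two auxiliary facts are required here: the weighted $L^q$-continuity of commutators of $(s,\delta)$-SCZO with $\mathrm{BMO}(\rho)$ symbols for weights in $A_{q/s'}^\rho$ (the $\rho$-analogue of the classical commutator theory, to be established or cited), and the weight lemma stating that $uv^{1-q}\in A_{q/s'}^\rho$ for some admissible exponent $q>s'$; it is precisely here that $u^{s'}\in A_1^\rho$ and $\beta>s'$ are used.

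The core is the bad part. For $x\notin\tilde\Omega$ we bound the corresponding measure by $4\sum_j\int_{(2Q_j)^c}|T_b(h_jv)|\,u$ and use $T_b(h_jv)=(b-b_{Q_j})T(h_jv)-T\big((b-b_{Q_j})h_jv\big)$. Decomposing $\mathbb{R}^n\setminus 2Q_j$ into the annuli $C_k^j:=2^{k+1}Q_j\setminus 2^kQ_j$, the first summand is controlled with the smoothness condition \eqref{suav-horm} (here $\int h_jv=0$ is used), gaining a factor $2^{-k\delta}$; writing $b-b_{Q_j}=(b-b_{C_k^j})+(b_{C_k^j}-b_{Q_j})$, the oscillatory factor is absorbed against $u$ by a generalized Hölder inequality of $\exp L$--$L\log L$ type together with John--Nirenberg for $\mathrm{BMO}_\theta(\rho)$, while the jump $|b_{C_k^j}-b_{Q_j}|$ merely contributes a harmless factor $k+1$. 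For the second summand we use the size condition \eqref{TamHorm} and Hölder with exponent $s$, so that $\big(\int_{C_k^j}u^{s'}\big)^{1/s'}\lesssim |C_k^j|^{1/s'}\inf_{Q_j}u$ by $u^{s'}\in A_1^\rho$ and the powers of the radius cancel, while
\[
\int_{Q_j}|b-b_{Q_j}|\,|h_j|\,v\ \lesssim\ \|b\|_{\mathrm{BMO}_\theta(\rho)}\,v(Q_j)\,\|f\|_{L\log L,\,Q_j,\,v}\ \lesssim\ \int_{Q_j}\Phi(|f|)\,v,
\]
the last step using once more that $\langle f\rangle^v_{Q_j}>1$. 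Summing the convergent series in $k$ (after taking $N$ large in \eqref{TamHorm}) and then over the disjoint cubes gives $\sum_j(\inf_{Q_j}u)\int_{Q_j}\Phi(|f|)\,v\le\sum_j\int_{Q_j}\Phi(|f|)\,uv\le\int_{\mathbb{R}^n}\Phi(|f|)\,uv$, which completes the argument.

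I expect the main obstacles to be the two weight lemmas --- the local doubling of $uv$ and, especially, the membership $uv^{1-q}\in A_{q/s'}^\rho$ for an admissible $q$, whose proof should combine a reverse Hölder inequality for $u^{s'}$ with the slack $\beta>s'$ --- together with the bookkeeping that forces the $\mathrm{BMO}(\rho)$ corrections, the annular sums and the $L^s$-averaged kernel conditions to converge simultaneously; the $L\log L$ (i.e.\ $\Phi$) character of the right-hand side enters the proof exclusively through the generalized Hölder estimate applied to the non-cancellative term $T\big((b-b_{Q_j})h_jv\big)$.
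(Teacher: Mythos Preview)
Your overall architecture is close to the paper's, but there is a genuine gap in the treatment of what you call the ``second summand'' $T\big((b-b_{Q_j})h_jv\big)$. You propose to control $\int_{(2Q_j)^c}|T((b-b_{Q_j})h_jv)|\,u$ by Fubini plus the size condition \eqref{TamHorm} and H\"older with exponent $s$. Carrying this out gives, after the powers of the radius cancel,
\[
\int_{\mathbb{R}^n\setminus \tilde Q_j}|K(x,y)|\,u(x)\,dx\ \lesssim\ \Big(\inf_{Q_j}u\Big)\sum_{k\ge 1}\Big(1+\tfrac{2^{k}r_j}{\rho(x_j)}\Big)^{-N+\theta},
\]
and you assert this series converges for $N$ large. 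It does not, in the regime you are in: after your localization to critical balls the cubes $Q_j$ are subcritical, and when $r_j\ll\rho(x_j)$ the first $\sim\log(\rho(x_j)/r_j)$ terms of the sum are each $\approx 1$, so the sum is $\approx\log(\rho(x_j)/r_j)$, unbounded in $j$ regardless of $N$. The decay $(1+R/\rho)^{-N}$ only kicks in once $R\gtrsim\rho$, and for small subcritical cubes there are arbitrarily many annuli before that happens. The paper's $D_2$ computation, which looks superficially like what you wrote, works only because there $j\in J_2$ forces $r_j>\rho(x_j)$, so every term already decays geometrically.

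The paper fixes this by \emph{not} estimating this piece kernelwise at all on the subcritical cubes: it invokes the already established mixed weak-type inequality for $T$ itself (Theorem~\ref{teo: mixta para T SCZO s, delta}), which yields
\[
uv\Big(\Big\{x\notin\tilde\Omega:\ \tfrac{|T(\sum_j(b-b_{Q_j})h_jv)|}{v}>\tfrac{\lambda}{6}\Big\}\Big)\ \lesssim\ \frac{1}{\lambda}\sum_j\int_{Q_j}|b-b_{Q_j}|\,|h_j|\,uv,
\]
and \emph{then} the $L\log L$--$\exp L$ H\"older step you describe finishes it. This is the missing idea in your outline. A secondary point: you ask for $uv^{1-q}\in A_{q/s'}^\rho$ to run the good part, but the paper's Theorem~\ref{teo: tipo fuerte del conmutador (s,delta)} only requires $uv^{1-q}\in A_q^\rho$, which follows directly from $u\in A_1^\rho$ (a consequence of $u^{s'}\in A_1^\rho$), $v\in A_{q'}^\rho(u)$ (via Lemma~\ref{lema: propiedades de Ap,rho}\eqref{item: lema: propiedades de Ap,rho - item b},\eqref{item: lema: propiedades de Ap,rho - item c}) and Lemma~\ref{lema: propiedades de Ap,rho}\eqref{item: lema: propiedades de Ap,rho - item a}; the stronger membership you posit is unnecessary and possibly not available. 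Finally, the paper does not pre-localize to a critical-ball cover; instead it uses the $\rho$-adapted Calder\'on--Zygmund decomposition of Lemma~\ref{lema: CZ decomposition} with a large parameter $\theta$, which automatically separates subcritical cubes ($h_1$, handled with cancellation and the mixed inequality for $T$) from supercritical ones ($h_2$, handled with size only, where your annular sum \emph{does} converge).
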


\section{Preliminaries and definitions}

We shall establish our estimates on the Euclidean space $\mathbb{R}^n$. By   $Q(x,r)$ we understand a cube with sides parallel to the coordinate axes, with centre at the point $x$ and radius $r$. We shall also denote with $\ell(Q)$ the length of the edges of $Q$ if necessary. Notice that we have the relation $r=\sqrt{n}\,\ell(Q)/2$. We shall also distinguish the cubes in terms of the critical radius function: if $Q=Q(x,r)$ verifies $r\leq \rho(x)$ we say that $Q$ is a \emph{subcritical cube} and we call it \emph{critical} when $r=\rho(x)$.

We now establish the classes of weights associated to $\rho$, which can be understood as a perturbation of the  weighted Muckenhoupt classes by means of $\rho$. Given a weight $u$, $1<p<\infty$ and a nonnegative number $\theta$, we say that $w\in A_p^{\rho,\theta}(u)$ if there exists $C>0$ such that the inequality
\begin{equation}\label{eq: clase Ap,rho,theta(u)}
\left(\frac{1}{u(Q)}\int_Q wu\right)^{1/p}\left(\frac{1}{u(Q)}\int_Q w^{1-p'}u\right)^{1/p'}\leq C\left(1+\frac{r}{\rho(x)}\right)^{\theta}
\end{equation} 
holds for every cube $Q=Q(x,r)$. The expression $u(Q)$ stands for  $\int_Qu$, as usual.

Similarly, $w\in A_1^{\rho,\theta}(u)$ if there exists a positive constant $C$ such that
\begin{equation}\label{eq: clase A1,rho,theta(u)}
\frac{1}{u(Q)}\int_Q wu\leq C\left(1+\frac{r}{\rho(x)}\right)^{\theta}\inf_Q w,
\end{equation} 
for every cube $Q=Q(x,r)$.  We also define $A_\infty^{\rho,\theta}(u)=\bigcup_{p\geq 1} A_p^{\rho,\theta}(u)$. The smallest constants $C$ in $(\ref{eq: clase Ap,rho,theta(u)})$ and $(\ref{eq: clase A1,rho,theta(u)})$ will be denoted by $[w]_{A_p^{\rho,\theta}(u)}$. When $u=1$ we shall denote it by $[w]_{A_p^{\rho,\theta}}$.

For $1\leq p\leq \infty$, the $A_p^\rho(u)$ class is the collection of all the $A_p^{\rho,\theta}(u)$ classes for $\theta\geq 0$, that is
\[A_p^\rho(u)=\bigcup_{\theta\geq 0} A_p^{\rho,\theta}(u).\]
When $u=1$ we shall directly write $A_p^{\rho,\theta}(u)=A_p^{\rho,\theta}$ and $A_p^\rho(u)=A_p^\rho$, which were introduced in~\cite{BHS11JMAA}.  

\begin{obs}\label{obs: v en Ap implica v^{1-p'} en Ap'} It is known that $v\in A_p^{\rho}$ implies $v^{1-p'}\in A_{p'}^{\rho}$, see \cite{BHS11JMAA}.

\end{obs}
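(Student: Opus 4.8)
The plan is to observe that this is a purely formal duality identity, exactly mirroring the classical Muckenhoupt case, because the perturbation factor $(1+r/\rho(x))^{\theta}$ on the right-hand side of \eqref{eq: clase Ap,rho,theta(u)} is untouched by the duality. First I would record the elementary exponent arithmetic: from $1/p+1/p'=1$ one has $(p')'=p$ and $pp'=p+p'$, whence $(1-p')(1-p)=1-p-p'+pp'=1$. Consequently, if we set $w:=v^{1-p'}$, then $w^{1-(p')'}=w^{1-p}=v^{(1-p')(1-p)}=v$.

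Next I would simply unwind the definition of $A_{p'}^{\rho,\theta}$ (the case $u=1$ of \eqref{eq: clase Ap,rho,theta(u)}, with $p$ replaced by $p'$) applied to $w$: for a cube $Q=Q(x,r)$ it reads
\[
\left(\frac{1}{|Q|}\int_Q w\right)^{1/p'}\left(\frac{1}{|Q|}\int_Q w^{1-p}\right)^{1/p}\leq C\left(1+\frac{r}{\rho(x)}\right)^{\theta}.
\]
Substituting $w=v^{1-p'}$ and $w^{1-p}=v$ from the previous step turns this into
\[
\left(\frac{1}{|Q|}\int_Q v^{1-p'}\right)^{1/p'}\left(\frac{1}{|Q|}\int_Q v\right)^{1/p}\leq C\left(1+\frac{r}{\rho(x)}\right)^{\theta},
\]
which, after commuting the two factors, is precisely the $A_p^{\rho,\theta}$ condition for $v$ given by \eqref{eq: clase Ap,rho,theta(u)}. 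Hence if $v\in A_p^{\rho,\theta}$ then $v^{1-p'}\in A_{p'}^{\rho,\theta}$, with the \emph{same} $\theta$ and the same constant; in fact $[v^{1-p'}]_{A_{p'}^{\rho,\theta}}=[v]_{A_p^{\rho,\theta}}$. Taking the union over $\theta\geq0$ gives $v\in A_p^{\rho}\Rightarrow v^{1-p'}\in A_{p'}^{\rho}$, which is the assertion.

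I do not expect any real obstacle here: the only inputs are the two exponent identities above, and no property of $\rho$ is used beyond the fact that $(1+r/\rho(x))^{\theta}$ appears verbatim on the right-hand side of the weight condition and so is invariant under interchanging the roles of the two averages. The identical computation, performed with the measure $u\,dx$ in place of $dx$ and $u(Q)$ in place of $|Q|$, shows more generally that $v\in A_p^{\rho,\theta}(u)\Rightarrow v^{1-p'}\in A_{p'}^{\rho,\theta}(u)$, although only $u=1$ is needed for the remark; and since the argument is symmetric (applying it again to $v^{1-p'}$ returns $v$), the implication is in fact an equivalence.
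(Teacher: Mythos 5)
Your proof is correct: the exponent identity $(1-p')(1-p)=1$ makes the $A_{p'}^{\rho,\theta}$ condition for $v^{1-p'}$ literally the $A_p^{\rho,\theta}$ condition for $v$ with the two factors interchanged, and the perturbation factor $\left(1+\frac{r}{\rho(x)}\right)^{\theta}$ is unaffected, so the constant and the parameter $\theta$ are preserved. The paper offers no proof and simply cites \cite{BHS11JMAA}; your direct duality computation is the standard (and essentially the only) argument for this fact, valid of course for $1<p<\infty$.
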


The following lemma establishes a  property that relates  $A_p^{\rho}(u)$ and $A_p^\rho$ classes. 

\begin{lema}\label{lema: propiedades de Ap,rho}
The following properties hold.
\begin{enumerate}[\rm (a)]
    \item\label{item: lema: propiedades de Ap,rho - item a}  If $1\leq p\leq \infty$, $u\in A_1^{\rho}$ and $v\in A_p^\rho(u)$, then $uv\in A_p^\rho$.
    \item \label{item: lema: propiedades de Ap,rho - item b} If $0<\alpha<1$, $1<p\leq \infty$, $u\in A_1^\rho$ and $v\in A_p^\rho(u)$, then $v\in A_p^{\rho}(u^{\alpha})$.
    \item \label{item: lema: propiedades de Ap,rho - item c} If $u\in A_1^\rho$ then $A_p^\rho(u)\subseteq A_p^\rho$.
\end{enumerate}
\end{lema}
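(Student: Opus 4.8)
The plan is to prove the three items in order, each time reducing an $A_p^\rho$-type statement to the defining inequalities for $u\in A_1^\rho$ and $v\in A_p^\rho(u)$ on a fixed cube $Q=Q(x,r)$, and absorbing every extra factor into a power of $(1+r/\rho(x))$. For item (a), first I would dispose of the cases $p=1$ and $p=\infty$ separately, then treat $1<p<\infty$. For $1<p<\infty$, write the $A_p^\rho$ quotient for $uv$ as
\[
\left(\frac{1}{|Q|}\int_Q uv\right)^{1/p}\left(\frac{1}{|Q|}\int_Q (uv)^{1-p'}\right)^{1/p'}.
\]
The first factor I would rewrite as $\left(\frac{u(Q)}{|Q|}\right)^{1/p}\left(\frac{1}{u(Q)}\int_Q vu\right)^{1/p}$, where the second piece is controlled by the $A_p^\rho(u)$ condition for $v$. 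The second factor is the delicate one: $(uv)^{1-p'}=v^{1-p'}u^{1-p'}$, and one needs $\int_Q v^{1-p'}u^{1-p'}$ bounded in terms of $\left(\frac{1}{u(Q)}\int_Q v^{1-p'}u\right)$ times appropriate powers of $|Q|/u(Q)$. The key tool here is that $u\in A_1^\rho$ controls $u^{1-p'}u = u^{2-p'}$? No — rather, I would use that $u\in A_1^\rho$ gives, for cubes, a bound relating averages of $u^{-\gamma}$ to powers of the average of $u$: since $u\in A_1^\rho$ one has $u\in A_\infty^\rho$, and reverse-Hölder / self-improvement in the $\rho$-setting yields $u^{1-p'}\le [u]_{A_1^\rho}^{\,p'-1}(\tfrac{|Q|}{u(Q)})^{p'-1}(1+r/\rho(x))^{\theta(p'-1)}$ pointwise a.e.\ on $Q$ (from $\inf_Q u \ge$ a multiple of the average). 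Combining, the product telescopes: the $u(Q)/|Q|$ factors cancel between the two halves, and one is left with $\left(\frac{1}{u(Q)}\int_Q vu\right)^{1/p}\left(\frac{1}{u(Q)}\int_Q v^{1-p'}u\right)^{1/p'}$ times powers of $(1+r/\rho(x))$, which is exactly $[v]_{A_p^\rho(u)}$ up to a harmless exponent adjustment on $\theta$. This yields $uv\in A_p^{\rho,\theta'}\subset A_p^\rho$.

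For item (b), with $0<\alpha<1$, $u\in A_1^\rho$, $v\in A_p^\rho(u)$, I want $v\in A_p^\rho(u^\alpha)$. The strategy is to compare the measures $u^\alpha\,dx$ and $u\,dx$ on a cube. Since $0<\alpha<1$, Jensen/Hölder gives $\frac{1}{|Q|}\int_Q u^\alpha \le \left(\frac{1}{|Q|}\int_Q u\right)^\alpha$, i.e.\ $u^\alpha(Q)\le |Q|^{1-\alpha}u(Q)^\alpha$; and in the other direction $u\in A_1^\rho$ gives $\inf_Q u \ge c(1+r/\rho)^{-\theta}\frac{u(Q)}{|Q|}$, hence $u(Q)=\int_Q u^{1-\alpha}u^\alpha \le (\sup\text{ stuff})$... more cleanly, $u^\alpha \ge (\inf_Q u)^{\alpha-1}u$ on $Q$ since $\alpha-1<0$, giving $u^\alpha(Q)\ge (\inf_Q u)^{\alpha-1}u(Q) \ge c\,(1+r/\rho)^{\theta(1-\alpha)}(|Q|/u(Q))^{1-\alpha}u(Q)$. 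So $u^\alpha(Q)\approx |Q|^{1-\alpha}u(Q)^\alpha$ up to $\rho$-factors. Then I would rewrite each average $\frac{1}{u^\alpha(Q)}\int_Q w u^\alpha$ appearing in the $A_p^\rho(u^\alpha)$ quotient for $w\in\{v,v^{1-p'}\}$: on $Q$, $u^\alpha = u\cdot u^{\alpha-1}$ with $u^{\alpha-1}\le (\inf_Q u)^{\alpha-1}$, so $\int_Q wu^\alpha \le (\inf_Q u)^{\alpha-1}\int_Q wu$ and $u^\alpha(Q)\ge (\inf_Q u)^{\alpha-1}u(Q)$; hence $\frac{1}{u^\alpha(Q)}\int_Q w u^\alpha$ is comparable to $\frac{1}{u(Q)}\int_Q wu$ up to bounded ratios of $\inf_Q u$ against the average, i.e.\ up to powers of $(1+r/\rho(x))$. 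Plugging both $w=v$ and $w=v^{1-p'}$ into this comparison and using $v\in A_p^\rho(u)$ finishes item (b). (The case $p=\infty$ is immediate from the definition $A_\infty^\rho(u)=\bigcup_{p}A_p^\rho(u)$ once it is known for finite $p$.)

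Item (c) is then essentially a corollary: given $v\in A_p^\rho(u)$ with $u\in A_1^\rho$, for $1<p<\infty$ I would run the computation of item (a) but now extracting from $uv\in A_p^\rho$ the statement about $v$ alone — alternatively, directly estimate the $A_p^\rho$ quotient of $v$, using $v(Q)=\int_Q v\,u\cdot u^{-1}\le (\inf_Q u)^{-1}\int_Q vu$ together with $\inf_Q u \ge c(1+r/\rho)^{-\theta}u(Q)/|Q|$ (and similarly for $v^{1-p'}$), and then $\frac{1}{|Q|}\int_Q v \le \frac{|Q|}{u(Q)}(1+r/\rho)^\theta \cdot \frac{1}{u(Q)}\int_Q vu$ up to constants; multiplying the $p$ and $p'$ pieces, the factors $|Q|/u(Q)$ cancel against $u(Q)/|Q|$ and one is left with $[v]_{A_p^\rho(u)}$ times a power of $(1+r/\rho(x))$. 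For $p=1$ and $p=\infty$ the argument is the analogous (simpler) one-sided estimate. I expect the main obstacle to be item (a): making the cancellation of the $|Q|/u(Q)$ factors completely rigorous requires the pointwise lower bound $\inf_Q u \gtrsim (1+r/\rho(x))^{-\theta}\,\fint_Q u$, which is exactly the $A_1^\rho$ condition, and one must be careful to track how the exponent $\theta$ on the $\rho$-factor grows through the computation (it is multiplied by constants like $p'-1$ and $1-\alpha$, but stays finite, so the conclusion lands in $A_p^\rho=\bigcup_\theta A_p^{\rho,\theta}$ as required). Everything else is Hölder's inequality and bookkeeping.
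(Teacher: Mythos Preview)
The paper proves only item~(c), deferring (a) and (b) to \cite{BPQ24}. For (c) your ``alternative'' direct argument is exactly the paper's: on a cube $Q$ insert $u\cdot u^{-1}$ and bound $u^{-1}\le(\inf_Q u)^{-1}$ pointwise for both $w=v$ and $w=v^{1-p'}$, then use the $A_1^{\rho}$ condition $(\inf_Q u)^{-1}\,\frac{u(Q)}{|Q|}\le [u]_{A_1^{\rho,\theta_1}}(1+r/\rho(x))^{\theta_1}$ to finish. One small bookkeeping slip: the inequality you arrive at is
\[
\frac{1}{|Q|}\int_Q v \;\le\; [u]_{A_1^{\rho,\theta}}\left(1+\frac{r}{\rho(x)}\right)^{\theta}\frac{1}{u(Q)}\int_Q vu,
\]
with no extra factor $|Q|/u(Q)$; so when you multiply the $1/p$ and $1/p'$ pieces there is nothing to ``cancel against $u(Q)/|Q|$'' --- the $A_p^\rho(u)$ quotient for $v$ appears directly, times $(1+r/\rho(x))^{\theta_1+\theta_2}$, precisely as in the paper.

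Your sketches for (a) and (b) are along the right lines, though note a similar slip in (a): bounding $u^{1-p'}$ pointwise by a constant leaves you with $\int_Q v^{1-p'}$, not $\int_Q v^{1-p'}u$, so you do not land on the $A_p^\rho(u)$ quotient as written. The cleanest fix is to write $u^{1-p'}=u\cdot u^{-p'}$ and bound $u^{-p'}\le(\inf_Q u)^{-p'}$ instead; then the second factor contains $\int_Q v^{1-p'}u$ directly and the product collapses to $(\inf_Q u)^{-1}\frac{u(Q)}{|Q|}$ times the $A_p^\rho(u)$ quotient for $v$, exactly as needed.
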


\begin{proof}
    We only give the proof of item~\eqref{item: lema: propiedades de Ap,rho - item c}, since the proofs of \eqref{item: lema: propiedades de Ap,rho - item a} and \eqref{item: lema: propiedades de Ap,rho - item b} can be found in \cite{BPQ24}. Let $1<p<\infty$ and $w\in A_p^\rho(u)$. By hypothesis, there exist nonnegative numbers $\theta_1$ and $\theta_2$ such that $u\in A_1^{\rho,\theta_1}$ and $w\in A_p^{\rho,\theta_2}(u)$. For any cube $Q=Q(x,r)$ we have
    \begin{align*}
        \left(\frac{1}{|Q|}\int_Q w\right)^{1/p}\left(\frac{1}{|Q|}\int_Q w^{1-p'}\right)^{1/p'}&\leq \left(\sup_Q u^{-1}\right)\frac{u(Q)}{|Q|}\left(\frac{1}{u(Q)}\int_Q wu\right)^{1/p}\left(\frac{1}{u(Q)}\int_Q w^{1-p'}u\right)^{1/p'}\\
        &\leq [u]_{A_1^{\rho,\theta_1}}[w]_{A_p^{\rho,\theta_2}(u)}\left(1+\frac{r}{\rho(x)}\right)^{\theta_1+\theta_2}.
    \end{align*}
    Thus $w\in A_p^{\rho,\theta_1+\theta_2}$ and $[w]_{A_p^{\rho,\theta_1+\theta_2}}\leq [u]_{A_1^{\rho,\theta_1}}[w]_{A_p^{\rho,\theta_2}(u)}$.
\end{proof}




It is known, as in the classical case, that every $A_p^\rho$ weight satisfies a \emph{reverse Hölder} condition (see, for example, \cite{BHS11JMAA}). For $1<s<\infty$ and $\theta\geq 0$ we say that $w\in \text{RH}_s^{\rho,\theta}$ if there exists $C>0$ such that the inequality
\[\left(\frac{1}{|Q|}\int_Q w^s\right)^{1/s}\leq C\left(\frac{1}{|Q|}\int_Q w\right)\left(1+\frac{r}{\rho(x)}\right)^{\theta}\]
holds for every cube $Q=Q(x,r)$. We also have, as before, $\text{RH}_s^{\rho}=\bigcup_{\theta\geq 0}\text{RH}_s^{\rho,\theta}$.  The smallest constant $C$ in the inequality above will be denoted by $[w]_{\text{RH}_s^{\rho,\theta}}$.

We say that $\varphi\colon [0,\infty)\to[0,\infty)$ is a Young function if it is  a strictly increasing and convex function that verifies $\varphi(0)=0$ and $\varphi(t)\rightarrow\infty$ when $t\rightarrow \infty$. 

The complementary function $\tilde\varphi$ of such $\varphi$ is defined  by
\[\tilde\varphi(t)=\sup\{ts-\varphi(s): s\geq 0\}.\]
If $\varphi$ and $\tilde\varphi$ are Young functions, the relation
\begin{equation}\label{eq: producto de inversas como t}
\varphi^{-1}(t)\tilde\varphi^{-1}(t)\approx t
\end{equation} 
holds for every $t$, where $\varphi^{-1}$ stands for the generalized inverse function (see, for example, \cite{KR} or \cite{raoren}).

Given a Young function $\varphi$ and a weight $w$, the quantity  $\|f\|_{\varphi, Q, w}$ denotes the weighted Luxemburg average of $f$ over $Q$, defined by
\[\|f\|_{\varphi,Q,w}=\inf\left\{\lambda>0: \frac{1}{w(Q)}\int_Q\varphi\left(\frac{|f|}{\lambda}\right)w\leq 1\right\}.\]
In fact, the infimum above is actually a minimum, since it can be seen that
\begin{equation}\label{eq: norma realiza el infimo}
    \frac{1}{w(Q)}\int_Q\varphi\left(\frac{|f|}{\|f\|_{\varphi,Q,w}}\right)w\leq 1.
\end{equation}
The following lemma gives an equivalent expression for the norm $\|\cdot\|_{\varphi,Q,w}$ that will be useful in our estimates. We include the proof for the sake of completeness.

\begin{lema}
    Let $w$ be a weight, $Q$ a cube and $f$ such that $\varphi(|f|)\in L^{1}_{\mathrm{loc}}(w)$. Then we have that
    \begin{equation}\label{eq: norma equivalente}
\|f\|_{\varphi,Q,w}\approx \inf_{\lambda>0}\left\{\lambda +\frac{\lambda}{w(Q)}\int_Q\varphi \left(\frac{|f|}{\lambda}\right)\, w\right\}.
\end{equation}
\end{lema}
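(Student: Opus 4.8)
The plan is to establish the sharper chain $\|f\|_{\varphi,Q,w}\le I\le 2\|f\|_{\varphi,Q,w}$, where $I$ denotes the infimum on the right-hand side of \eqref{eq: norma equivalente}; write $N=\|f\|_{\varphi,Q,w}$ for brevity. For the bound $I\le 2N$, I would simply test the infimum defining $I$ at the value $\lambda=N$: by \eqref{eq: norma realiza el infimo} we have $\frac{1}{w(Q)}\int_Q\varphi(|f|/N)\,w\le 1$, whence
\[
I\le N+\frac{N}{w(Q)}\int_Q\varphi\left(\frac{|f|}{N}\right)w\le N+N=2N.
\]
(If one prefers not to invoke \eqref{eq: norma realiza el infimo}, the same estimate follows by testing at $\lambda=N+\varepsilon$ and letting $\varepsilon\to0^{+}$, using that $\{\lambda>0:\frac{1}{w(Q)}\int_Q\varphi(|f|/\lambda)\,w\le1\}$ is an interval because $\varphi$ is increasing.)

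For the reverse bound $N\le I$, the key ingredient is the elementary consequence of convexity: since $\varphi(0)=0$ and $\varphi$ is convex, for all $\tau\ge1$ and $s\ge0$ one has $\varphi(s/\tau)\le\varphi(s)/\tau$ (write $s/\tau=\tau^{-1}s+(1-\tau^{-1})\cdot0$ and use convexity). I would then fix $\lambda>0$ and set $A=\lambda+\frac{\lambda}{w(Q)}\int_Q\varphi(|f|/\lambda)\,w$. Since the integral term is nonnegative, $A\ge\lambda>0$; moreover $A<\infty$ whenever $\lambda\ge1$, because $\varphi(|f|)\in L^{1}_{\mathrm{loc}}(w)$ and $\varphi(|f|/\lambda)\le\varphi(|f|)$ in that range, while for $\lambda<1$ with $A=\infty$ the inequality $N\le A$ is vacuous, so we may assume $A<\infty$. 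Applying the convexity inequality with $\tau=A/\lambda\ge1$ gives
\[
\frac{1}{w(Q)}\int_Q\varphi\left(\frac{|f|}{A}\right)w=\frac{1}{w(Q)}\int_Q\varphi\left(\frac{|f|/\lambda}{A/\lambda}\right)w\le\frac{\lambda}{A}\cdot\frac{1}{w(Q)}\int_Q\varphi\left(\frac{|f|}{\lambda}\right)w=\frac{\lambda}{A}\cdot\frac{A-\lambda}{\lambda}=\frac{A-\lambda}{A}\le1.
\]
By the definition of the weighted Luxemburg norm this yields $N\le A$; taking the infimum over $\lambda>0$ gives $N\le I$, and combining with the first part we conclude $N\le I\le 2N$.

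I do not expect any genuine obstacle: the argument is a weighted analogue of the classical equivalence between the Luxemburg norm and the Amemiya-type expression. The only points requiring a little care are the interval structure of the sublevel set used to justify evaluating at $\lambda=N$ (equivalently, the use of \eqref{eq: norma realiza el infimo}) and the finiteness of the integrals that appear, both of which are controlled by the monotonicity of $\varphi$ together with the hypothesis $\varphi(|f|)\in L^{1}_{\mathrm{loc}}(w)$.
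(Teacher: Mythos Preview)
Your proof is correct and follows essentially the same approach as the paper: both directions use the same ingredients---testing the infimum at $\lambda=\|f\|_{\varphi,Q,w}$ together with \eqref{eq: norma realiza el infimo} for $I\le 2N$, and the convexity inequality $\varphi(s/\tau)\le\varphi(s)/\tau$ for $\tau\ge1$ to show that $A=\lambda+\frac{\lambda}{w(Q)}\int_Q\varphi(|f|/\lambda)\,w$ is admissible in the Luxemburg infimum for $N\le I$. The only differences are cosmetic: the paper presents the two inequalities in the opposite order, and you add a brief discussion of the finiteness of $A$ that the paper leaves implicit.
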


\begin{proof}
We first observe that from the convexity of $\varphi$
\begin{align*}
  \frac{1}{w(Q)}\bigintsss_Q \varphi\left(\frac{|f|}{\lambda+\frac{\lambda}{w(Q)}\int_Q \varphi\left(\frac{|f|}{\lambda}\right)w}\right)w&=\frac{1}{w(Q)}\bigintsss_Q \varphi\left(\frac{|f|}{\lambda\left(1+\frac{1}{w(Q)}\int_Q \varphi\left(\frac{|f|}{\lambda}\right)w\right)}\right)w\\
  &\leq \left(1+\frac{1}{w(Q)}\int_Q \varphi\left(\frac{|f|}{\lambda}\right)w\right)^{-1}\frac{1}{w(Q)}\bigintsss_Q \varphi\left(\frac{|f|}{\lambda}\right)w\\
  &\leq 1,
\end{align*}
which implies that 
\[\|f\|_{\varphi,Q,w}\leq \lambda +\frac{\lambda}{w(Q)}\int_Q\varphi \left(\frac{|f|}{\lambda}\right)\, w\]
for every $\lambda>0$. So we obtain that
\[\|f\|_{\varphi,Q,w}\leq \inf_{\lambda>0}\left\{\lambda +\frac{\lambda}{w(Q)}\int_Q\varphi \left(\frac{|f|}{\lambda}\right)\, w\right\}.\]
The other inequality trivially holds if $\|f\|_{\varphi,Q,w}=0$. If this is not the case, notice that
\[\inf_{\lambda>0}\left\{\lambda +\frac{\lambda}{w(Q)}\int_Q\varphi \left(\frac{|f|}{\lambda}\right)\, w\right\}\leq \|f\|_{\varphi,Q,w}+\frac{\|f\|_{\varphi,Q,w}}{w(Q)}\int_Q \varphi\left(\frac{|f|}{\|f\|_{\varphi,Q,w}}\right)w\leq 2\|f\|_{\varphi,Q,w}\]
by virtue of \eqref{eq: norma realiza el infimo}.
\end{proof}

If $\varphi$, $\psi$ and $\eta$ are Young functions that verify
 \[\eta^{-1}(t)\psi^{-1}(t)\lesssim \varphi^{-1}(t)\]
 for every $t\geq t_0>0$, we can conclude that there exists $K_0>0$ such that
 \begin{equation*}
 \varphi(st)\lesssim \eta(s)+\psi(t)
 \end{equation*}
 for $s,t\geq K_0$. As a consequence, the generalized version of Hölder inequality for Luxemburg averages
 \begin{equation}\label{eq: Holder generalizada con promedios Luxemburgo}
 \|fg\|_{\varphi, Q, w}\lesssim \|f\|_{\eta, Q, w}\|g\|_{\psi, Q, w}
 \end{equation}
holds for every weight $w$ and every cube $Q$. Particularly, when we take $w=1$, by means of \eqref{eq: producto de inversas como t} we have that
\begin{equation}\label{eq: desigualdad de Holder generalizada}
\frac{1}{|Q|}\int_Q |fg|\lesssim \|f\|_{\varphi, Q}\,\|g\|_{\tilde\varphi, Q}.
\end{equation}

\section{Auxiliary results}

Let $\mu$ be a measure satisfying the following property: there exist constants $C>0$ and $\sigma\geq 0$ such that the inequality
\begin{equation}\label{eq: propiedad de mu}
   \mu(2Q)\leq C\mu(Q)\left(1+\frac{r}{\rho(x)}\right)^{\sigma} 
\end{equation}
holds for every cube $Q=Q(x,r)$.

\begin{obs}\label{obs: Ap implica duplicante}
It is easy to see that if $w\in A_p^\rho$ for some $1\leq p\le \infty$, then the measure $\mu$ given by $d\mu(x)=w(x)\,dx$ satisfies \eqref{eq: propiedad de mu}.    
\end{obs}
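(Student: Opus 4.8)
The plan is to derive \eqref{eq: propiedad de mu} from the classical fact that Muckenhoupt weights are doubling, keeping track of how the factor $\bigl(1+r/\rho(x)\bigr)$ is produced. Since $w\in A_p^{\rho}$ there is $\theta\ge 0$ with $w\in A_p^{\rho,\theta}$; moreover, if $p=\infty$ then by definition $w\in A_q^{\rho,\theta}$ for some finite $q\ge 1$, and we may replace $p$ by $q$. Hence it suffices to treat the case $1\le p<\infty$. Throughout we fix a cube $Q=Q(x,r)$ and use that $2Q=Q(x,2r)$ has the same centre $x$, that $|2Q|=2^{n}|Q|$, $Q\subset 2Q$, and that $1+2r/\rho(x)\le 2\bigl(1+r/\rho(x)\bigr)$.

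For $p=1$ I would use that $\inf_{2Q}w\le\inf_Q w\le\frac{1}{|Q|}\int_Q w=\frac{w(Q)}{|Q|}$, so that the $A_1^{\rho,\theta}$ inequality applied to the cube $2Q$ gives
\[
w(2Q)\le [w]_{A_1^{\rho,\theta}}\,|2Q|\Bigl(1+\frac{2r}{\rho(x)}\Bigr)^{\theta}\inf_{2Q}w\le 2^{n+\theta}[w]_{A_1^{\rho,\theta}}\Bigl(1+\frac{r}{\rho(x)}\Bigr)^{\theta}w(Q),
\]
which is \eqref{eq: propiedad de mu} with $\sigma=\theta$.

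For $1<p<\infty$ the weight $w^{1-p'}$ is locally integrable (this is implicit in the very definition of $A_p^{\rho,\theta}$, and also follows from $w^{1-p'}\in A_{p'}^{\rho}$, see Remark~\ref{obs: v en Ap implica v^{1-p'} en Ap'}). Hölder's inequality with exponents $p$ and $p'$ then yields
\[
|Q|=\int_Q w^{1/p}\,w^{-1/p}\le\Bigl(\int_Q w\Bigr)^{1/p}\Bigl(\int_Q w^{1-p'}\Bigr)^{1/p'}\le w(Q)^{1/p}\Bigl(\int_{2Q} w^{1-p'}\Bigr)^{1/p'},
\]
so that $\bigl(\int_{2Q}w^{1-p'}\bigr)^{1/p'}\ge |Q|\,w(Q)^{-1/p}$. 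On the other hand, multiplying the $A_p^{\rho,\theta}$ inequality written for the cube $2Q$ by $|2Q|$ gives
\[
w(2Q)^{1/p}\Bigl(\int_{2Q}w^{1-p'}\Bigr)^{1/p'}\le [w]_{A_p^{\rho,\theta}}\,|2Q|\Bigl(1+\frac{2r}{\rho(x)}\Bigr)^{\theta}.
\]
Substituting the previous lower bound and using $|2Q|/|Q|=2^{n}$ we obtain
\[
w(2Q)^{1/p}\le 2^{n}[w]_{A_p^{\rho,\theta}}\Bigl(1+\frac{2r}{\rho(x)}\Bigr)^{\theta}w(Q)^{1/p},
\]
and raising to the $p$-th power, together with $1+2r/\rho(x)\le 2\bigl(1+r/\rho(x)\bigr)$, gives \eqref{eq: propiedad de mu} with $\sigma=p\theta$ and a constant depending only on $n$, $p$, $\theta$ and $[w]_{A_p^{\rho,\theta}}$.

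I do not expect any genuine obstacle here: this is exactly the classical argument that $A_p$ weights are doubling, and the only point requiring a little care is that passing from $Q=Q(x,r)$ to $2Q$ leaves the centre fixed and turns $r$ into $2r$, so the factor $\bigl(1+2r/\rho(x)\bigr)^{p\theta}$ supplied by the weight condition must be absorbed into $\bigl(1+r/\rho(x)\bigr)^{p\theta}$ via $1+2t\le 2(1+t)$; no iteration of the doubling step is needed, since \eqref{eq: propiedad de mu} only concerns the single dilation $Q\mapsto 2Q$.
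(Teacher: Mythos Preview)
Your argument is correct; it is the standard adaptation of the classical proof that $A_p$ weights are doubling, with the factor $(1+r/\rho(x))^{\theta}$ carried along. The paper does not supply a proof of this remark at all (it is simply asserted as ``easy to see''), so there is nothing to compare against; your write-up is exactly the kind of routine verification the authors had in mind.
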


The following result is a slight modification of Lemma 9 in \cite{BHS12}  for doubling measures in the sense of \eqref{eq: propiedad de mu}.

\begin{lema}[Calderón-Zygmund decomposition]\label{lema: CZ decomposition}
Let $\rho$ be a critical radius function satisfying \eqref{eq: constantesRho}, $\theta\geq 0$ and $\mu$ be a measure satisfying \eqref{eq: propiedad de mu}. Then, for any positive $\lambda$ there exists a countable family of disjoint cubes $\{P_j\}$, $P_j=Q(x_j,r_j)$ such that 
\[\left(1+\frac{r_j}{\rho(x_j)}\right)^\theta \lambda <\frac{1}{\mu(P_j)}\int_{P_j}|f|\,d\mu\leq C\left(1+\frac{r_j}{\rho(x_j)}\right)^\sigma \lambda,\]
for some $C>0$ and $\sigma\geq \theta$, depending only on $\rho$ and $\mu$. On the other hand, $|f(x)|\leq \lambda$ for almost every $\displaystyle x\not\in\bigcup_j P_j$.
\end{lema}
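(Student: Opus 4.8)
The plan is to run the classical dyadic stopping-time construction, but with the stopping threshold modulated by $(1+r/\rho(x))^{\theta}$ and with Lebesgue measure replaced by $\mu$. Fix $\lambda>0$, work with the standard dyadic grid $\mathcal D$, and call a dyadic cube $Q=Q(x_Q,r_Q)$ \emph{selected} if
\[\frac{1}{\mu(Q)}\int_Q|f|\,d\mu>\left(1+\frac{r_Q}{\rho(x_Q)}\right)^{\theta}\lambda.\]
Let $\{P_j\}$ be the family of maximal selected cubes; since distinct maximal dyadic cubes are disjoint, this gives the required disjoint family. To know that $\{P_j\}$ is well defined one needs every selected cube to sit inside a maximal one, i.e.\ selected cubes have uniformly bounded size: this holds because for $Q$ with $r_Q$ large the right-hand side above tends to $\infty$ (when $\theta>0$, using the second inequality in \eqref{eq: constantesRho} to see $1+r_Q/\rho(x_Q)\to\infty$), while the left-hand side is at most $\|f\|_{L^1(\mu)}/\mu(Q)$ and $\mu(Q)$ is bounded below; when $\theta=0$ one uses instead that the measures of interest have infinite total mass on the relevant scales, so $\mu(Q)\to\infty$. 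The lower estimate in the statement is then immediate from the definition of selection.

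For the upper estimate I would compare $P_j$ with its dyadic parent $\widehat{P_j}$, which is not selected by maximality, so
\[\frac{1}{\mu(P_j)}\int_{P_j}|f|\,d\mu\le\frac{\mu(\widehat{P_j})}{\mu(P_j)}\cdot\frac{1}{\mu(\widehat{P_j})}\int_{\widehat{P_j}}|f|\,d\mu\le\frac{\mu(\widehat{P_j})}{\mu(P_j)}\left(1+\frac{r_{\widehat{P_j}}}{\rho(x_{\widehat{P_j}})}\right)^{\theta}\lambda.\]
Since $\widehat{P_j}\subseteq 4P_j$, two applications of \eqref{eq: propiedad de mu} give $\mu(\widehat{P_j})/\mu(P_j)\lesssim(1+r_j/\rho(x_j))^{2\sigma}$. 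Next, the center $x_{\widehat{P_j}}$ lies within distance $\lesssim r_j$ of $x_j$ and $r_{\widehat{P_j}}=2r_j$, so the left inequality in \eqref{eq: constantesRho} yields $\rho(x_{\widehat{P_j}})\gtrsim\rho(x_j)(1+r_j/\rho(x_j))^{-N_0}$ and hence $1+r_{\widehat{P_j}}/\rho(x_{\widehat{P_j}})\lesssim(1+r_j/\rho(x_j))^{N_0+1}$. Combining, the upper estimate holds with the new exponent $\sigma'=2\sigma+(N_0+1)\theta\ge\theta$ and a constant depending only on $\rho$ and $\mu$; renaming $\sigma'$ as $\sigma$ gives the statement.

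Finally, for the off-diagonal bound take $x\notin\bigcup_jP_j$. Then no dyadic cube containing $x$ is selected (such a cube would be contained in some maximal selected $P_j\ni x$), so $\mu(Q)^{-1}\int_Q|f|\,d\mu\le(1+r_Q/\rho(x_Q))^{\theta}\lambda$ for every dyadic $Q\ni x$. Letting $Q$ run through the decreasing sequence of dyadic cubes shrinking to $x$, the factor $(1+r_Q/\rho(x_Q))^{\theta}\to1$ (again controlling $\rho(x_Q)$ from below via \eqref{eq: constantesRho} as $r_Q\to0$), while the averages converge to $|f(x)|$ for $\mu$-a.e.\ $x$ by the dyadic Lebesgue differentiation / martingale convergence theorem; hence $|f(x)|\le\lambda$ for $\mu$-a.e.\ $x$ outside $\bigcup_jP_j$.

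The classical bookkeeping of the stopping time and the differentiation step are routine. The step I expect to require the most care is the one that replaces the classical pairing of "doubling of Lebesgue measure" with "constancy of the threshold'': namely simultaneously estimating $\mu(\widehat{P_j})/\mu(P_j)$ through \eqref{eq: propiedad de mu} and comparing $\rho$ at the two different centers $x_j$ and $x_{\widehat{P_j}}$ through \eqref{eq: constantesRho}, keeping track that every constant and exponent produced depends only on $\rho$ and $\mu$ and that the resulting $\sigma$ dominates $\theta$.
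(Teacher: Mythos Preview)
The paper does not actually prove this lemma: it is stated as ``a slight modification of Lemma~9 in \cite{BHS12}'' and no argument is given. Your proposal therefore supplies what the paper omits, and the dyadic stopping-time scheme you describe is precisely the standard way such decompositions are obtained (and is, in essence, how the cited reference proceeds for the Lebesgue-measure case). The key adaptations you identify --- controlling $\mu(\widehat{P_j})/\mu(P_j)$ via two applications of \eqref{eq: propiedad de mu} and transferring the factor $(1+r_{\widehat{P_j}}/\rho(x_{\widehat{P_j}}))^{\theta}$ back to $x_j$ through \eqref{eq: constantesRho} --- are exactly the right ones, and your bookkeeping of the resulting exponent $\sigma'=2\sigma+(N_0+1)\theta$ is correct.

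Two small points deserve a line of care when you write it out. First, the existence of maximal selected cubes tacitly assumes something like $f\in L^1(\mu)$ (or bounded support), so that averages over large dyadic cubes are eventually small; this is harmless in the applications (where $f$ will satisfy $\int\Phi(|f|/\lambda)\,uv<\infty$) but should be stated. Second, the differentiation step at the end needs $\mu$ to be locally doubling: this follows from \eqref{eq: propiedad de mu} because on subcritical cubes the factor $(1+r/\rho(x))^{\sigma}$ is uniformly bounded, so the dyadic Lebesgue differentiation theorem applies to $\mu$ and gives $|f(x)|\le\lambda$ for $\mu$-a.e.\ (hence, since $d\mu=v\,dx$ with $v>0$ a.e., for Lebesgue-a.e.) $x\notin\bigcup_jP_j$.
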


The following lemma involves the function $\tilde{\Phi}(t)= {\rm e}^t-1$ which is the conjugate Young function of $\Phi(t)=t(1+\log^+t)$.

\begin{lema}\label{lema: con pesos menor que sin pesos}
Let $w$ be a weight such that $w\in\mathrm{RH}_s^{\rho,\theta}$ for some $s>1$ and $\theta \ge0$. If $Q=Q(x_Q, r_Q)$, then
\[\|f\|_{\tilde{\Phi},Q,w}\leq s'\, 2^{1/s'} [w]_{\mathrm{RH}_s^{\rho,\theta}}\left(1+\frac{r_Q}{\rho(x_Q)}\right)^{\theta}\|f\|_{\tilde{\Phi},Q}.\]
\end{lema}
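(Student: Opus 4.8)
The plan is to reduce the weighted norm inequality to the reverse Hölder condition by a duality/Hölder argument against the conjugate exponent pair $(\Phi,\tilde\Phi)$. By homogeneity it suffices to show that if $\|f\|_{\tilde\Phi,Q}=1$, i.e. $\frac{1}{|Q|}\int_Q(\mathrm e^{|f|}-1)\leq 1$, then $\|f\|_{\tilde\Phi,Q,w}\leq s'\,2^{1/s'}[w]_{\mathrm{RH}_s^{\rho,\theta}}(1+r_Q/\rho(x_Q))^\theta=:A$. Equivalently, I must check that with this value of $\lambda=A$ one has $\frac{1}{w(Q)}\int_Q(\mathrm e^{|f|/\lambda}-1)\,w\leq 1$. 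First I would estimate $\int_Q(\mathrm e^{|f|/\lambda}-1)\,w$ by Hölder's inequality with exponents $s$ and $s'$ in the form
\[
\frac{1}{|Q|}\int_Q\bigl(\mathrm e^{|f|/\lambda}-1\bigr)w
\le\Bigl(\frac{1}{|Q|}\int_Q w^{s}\Bigr)^{1/s}
\Bigl(\frac{1}{|Q|}\int_Q\bigl(\mathrm e^{|f|/\lambda}-1\bigr)^{s'}\Bigr)^{1/s'},
\]
then apply $w\in\mathrm{RH}_s^{\rho,\theta}$ to replace the first factor by $[w]_{\mathrm{RH}_s^{\rho,\theta}}(1+r_Q/\rho(x_Q))^\theta\,\frac{w(Q)}{|Q|}$.

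The key pointwise inequality I would use for the second factor is $(\mathrm e^{t}-1)^{s'}\le \mathrm e^{s't}-1$ for $t\ge 0$, which is elementary (e.g. since $x\mapsto x^{s'}$ is convex with value $0$ at $x=0$, so $(\mathrm e^t-1)^{s'}\le$ the tangent-type bound, or directly: $(\mathrm e^t-1)^{s'}=(\mathrm e^t-1)^{s'-1}(\mathrm e^t-1)\le(\mathrm e^{t})^{s'-1}(\mathrm e^t-1)\cdot$... more cleanly, $(\mathrm e^{t}-1)^{s'}\le \mathrm e^{s't}-1$ follows because both sides vanish at $t=0$ and the derivative of the right minus left side is nonnegative). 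Hence, writing $g=|f|/\lambda$ and noting $g\le |f|$ since $\lambda\ge 1$ (which I should verify: $s'2^{1/s'}\ge1$, $[w]_{\mathrm{RH}_s^{\rho,\theta}}\ge1$, and the $\rho$-factor is $\ge1$),
\[
\frac{1}{|Q|}\int_Q\bigl(\mathrm e^{|f|/\lambda}-1\bigr)^{s'}
\le\frac{1}{|Q|}\int_Q\bigl(\mathrm e^{s'|f|/\lambda}-1\bigr).
\]
Now choose $\lambda$ so that $s'/\lambda\le 1$, which, given $\lambda=s'2^{1/s'}[w]_{\mathrm{RH}_s^{\rho,\theta}}(1+r_Q/\rho(x_Q))^\theta$, holds because the remaining factors are $\ge1$; then $\mathrm e^{s'|f|/\lambda}-1\le \mathrm e^{|f|}-1$ and the integral is $\le 1$ by the normalization $\|f\|_{\tilde\Phi,Q}=1$. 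Putting the pieces together gives
\[
\frac{1}{|Q|}\int_Q\bigl(\mathrm e^{|f|/\lambda}-1\bigr)w
\le [w]_{\mathrm{RH}_s^{\rho,\theta}}\Bigl(1+\frac{r_Q}{\rho(x_Q)}\Bigr)^{\theta}\frac{w(Q)}{|Q|}\cdot 1,
\]
so $\frac{1}{w(Q)}\int_Q(\mathrm e^{|f|/\lambda}-1)w\le [w]_{\mathrm{RH}_s^{\rho,\theta}}(1+r_Q/\rho(x_Q))^\theta$, and dividing the definition of $\lambda$ by an extra factor absorbs this; in fact I will set up the bookkeeping so that the constant $s'2^{1/s'}$ is exactly what is needed. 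The cleanest route: show $\frac{1}{w(Q)}\int_Q\tilde\Phi(|f|/\lambda)w\le1$ directly with $\lambda$ as claimed, tracking how the $s'$ and the $2^{1/s'}$ enter — the $s'$ comes from needing $s'/\lambda\le$ something, and the $2^{1/s'}$ from a dyadic splitting of $\mathrm e^t-1$ or from the inequality $(\mathrm e^t-1)^{s'}\le 2^{s'-1}(\mathrm e^{s't}-1)$ type bound if one is less careful.

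The main obstacle is the precise constant chase: getting exactly $s'\,2^{1/s'}[w]_{\mathrm{RH}_s^{\rho,\theta}}$ rather than some larger constant requires choosing the optimal elementary inequality relating $(\mathrm e^t-1)^{s'}$ and $\mathrm e^{s't}-1$ and being careful that $\lambda\ge1$ so that rescaling $|f|\mapsto|f|/\lambda$ only helps. A secondary point to be careful about is that the normalization reduction ($\|f\|_{\tilde\Phi,Q}=1$) uses that the Luxemburg norm is positively homogeneous and that the infimum defining it is attained, i.e. \eqref{eq: norma realiza el infimo}; once that is in hand the argument is just Hölder plus reverse Hölder plus the scalar inequality, with no analytic subtlety beyond the constant tracking.
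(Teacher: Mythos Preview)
Your approach is essentially the paper's: H\"older with exponents $(s,s')$, the reverse H\"older hypothesis, and an elementary comparison between powers of $\tilde\Phi$. The one place you hesitate---the constant chase---is handled in the paper by a two-step organization rather than by aiming directly at $\le 1$. Instead of setting $\lambda=A$ and trying to force $\frac{1}{w(Q)}\int_Q\tilde\Phi(|f|/\lambda)\,w\le 1$, the paper first takes the smaller $\lambda=s'\|f\|_{\tilde\Phi,Q}$ and proves only
\[
\frac{1}{w(Q)}\int_Q\tilde\Phi\bigl(|f|/\lambda\bigr)\,w\le C:=2^{1/s'}[w]_{\mathrm{RH}_s^{\rho,\theta}}\Bigl(1+\tfrac{r_Q}{\rho(x_Q)}\Bigr)^{\theta},
\]
which is easy: drop the $-1$, apply H\"older so that the exponent $s'$ exactly cancels the $s'$ in $\lambda$ and produces $\bigl(\frac{1}{|Q|}\int_Q \mathrm e^{|f|/\|f\|_{\tilde\Phi,Q}}\bigr)^{1/s'}$, then add and subtract $1$ inside to bound this by $2^{1/s'}$ using \eqref{eq: norma realiza el infimo}. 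Since $C>1$, the convexity estimate $\tilde\Phi(t/C)\le C^{-1}\tilde\Phi(t)$ then gives $\frac{1}{w(Q)}\int_Q\tilde\Phi(|f|/(C\lambda))\,w\le 1$, i.e.\ $\|f\|_{\tilde\Phi,Q,w}\le C\lambda$, which is exactly the stated bound. Your inequality $(\mathrm e^t-1)^{s'}\le \mathrm e^{s't}-1$ is correct and would work too, but the paper's ``drop the $-1$ first, reinsert it later'' trick avoids it and makes the appearance of $2^{1/s'}$ transparent.
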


\begin{proof}
Fix  $\lambda=s'\|f\|_{\tilde{\Phi},Q}$. In order to show that
$\displaystyle \|f\|_{\tilde{\Phi},Q,w}\leq 2^{1/s'}[w]_{\mathrm{RH}_s^{\rho, \theta}} \left(1+\frac{r_Q}{\rho(x_Q)}\right)^{\theta}\lambda$,
 it is enough to prove that
\begin{equation}\label{eq: con pesos menor que sin pesos}
\frac{1}{w(Q)}\int_Q \left(e^{\frac{|f(x)|}{\lambda}}-1\right)w(x)\,dx \leq 2^{1/s'}[w]_{\mathrm{RH}_s^{\rho, \theta}}\left(1+\frac{r_Q}{\rho(x_Q)}\right)^{\theta}.
\end{equation}
Indeed, since $C=2^{1/s'}[w]_{\mathrm{RH}_s^{\rho, \theta}}\left(1+\frac{r_Q}{\rho(x_Q)}\right)^{\theta}>1$, from \eqref{eq: con pesos menor que sin pesos}  we obtain that
\[\frac{1}{w(Q)}\int_Q \left(e^{\frac{|f(x)|}{C\lambda}}-1\right)w(x)\,dx\leq
\frac{1}{Cw(Q)}\int_Q \left(e^{\frac{|f(x)|}{\lambda}}-1\right)w(x)\,dx \leq 1,\]
where we have used that $\psi(\alpha t)\leq \alpha \psi(t)$, for every convex function $\psi$ with $\psi(0)=0$ and every $\alpha\in[0,1]$.
Then we conclude that $\|f\|_{\tilde{\Phi},Q,w}\leq C\lambda$.

Then, let us prove that \eqref{eq: con pesos menor que sin pesos} holds.
From  H\"{o}lder's inequality and the reverse H\"{o}lder condition $\mathrm{RH}_{s}^{\rho,\theta}$, we get that
\begin{eqnarray*}
\frac{1}{w(Q)}\int_Q \left(e^{\frac{|f(x)|}{\lambda}}-1\right)w(x)\,dx &\leq& \frac{1}{w(Q)}\int_Q e^{\frac{|f(x)|}{\lambda}}w(x)\,dx\\
&\leq&\frac{|Q|}{w(Q)}\left(\frac{1}{|Q|}\int_Q e^{\frac{|f|}{\|f\|_{\tilde{\Phi},Q}}}\right)^{1/s'}\left(\frac{1}{|Q|}\int_Q w^s\right)^{1/s}\\
&\leq&[w]_{\textrm{RH}_s^{\rho, \theta}}\left(1+\frac{r_Q}{\rho(x_Q)}\right)^{\theta}\left(\frac{1}{|Q|}\int_Q e^{\frac{|f|}{\|f\|_{\tilde{\Phi},Q}}}\right)^{1/s'}\\
&=&[w]_{\textrm{RH}_s^{\rho, \theta}}\left(1+\frac{r_Q}{\rho(x_Q)}\right)^{\theta}\left(\frac{1}{|Q|}\int_Q \left(e^{\frac{|f|}{\|f\|_{\tilde{\Phi},Q}}}-1\right)+1 \right)^{1/s'}\\
&\leq& 2^{1/s'}[w]_{\textrm{RH}_s^{\rho, \theta}}\left(1+\frac{r_Q}{\rho(x_Q)}\right)^{\theta},
\end{eqnarray*}
where in the last inequality we have used \eqref{eq: norma realiza el infimo}. We are done.
\end{proof}

\begin{lema}[\cite{BHS11}]\label{lema: control de norma exp por BMO}
Let $\theta>0$, $b\in \mathrm{BMO}_\theta(\rho)$ and $\tilde\Phi(t)=e^t-1$, and $Q=Q(x,r)$. Then there exist positive constants $C$ and $\sigma$ such that 
\[\|b-b_Q\|_{\tilde\Phi, Q}\leq C\|b\|_{\mathrm{BMO}_\theta(\rho)}\left(1+\frac{r}{\rho(x)}\right)^{\sigma}.\]
\end{lema}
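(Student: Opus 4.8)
The plan is to reduce the statement to the \emph{classical local John--Nirenberg inequality}. Although a function $b\in\mathrm{BMO}_\theta(\rho)$ need not lie in the usual $\mathrm{BMO}$ space, its mean oscillation over \emph{every} subcube of the fixed cube $Q=Q(x,r)$ turns out to be uniformly bounded by $\|b\|_{\mathrm{BMO}_\theta(\rho)}$ times a single fixed power of $1+r/\rho(x)$. This is exactly the input the local John--Nirenberg scheme requires, and it yields exponential integrability of $b-b_Q$ over $Q$, which is precisely what the Luxemburg average $\|\cdot\|_{\tilde\Phi,Q}$ measures. So the whole argument splits into a ``$\rho$-geometry'' step and a purely classical step.

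\emph{Step 1: local control of the mean oscillation.} Fix $Q=Q(x,r)$ and let $Q'=Q(x',r')\subseteq Q$ be an arbitrary subcube. Since $x'\in Q$ and $\ell(Q')\le\ell(Q)$ we have $|x'-x|\le r$ and $r'\le r$. Applying the left-hand inequality in \eqref{eq: constantesRho} to the pair $(x,x')$ gives
\[\rho(x')\ge C_0^{-1}\rho(x)\Bigl(1+\frac{|x'-x|}{\rho(x)}\Bigr)^{-N_0}\ge C_0^{-1}\rho(x)\Bigl(1+\frac{r}{\rho(x)}\Bigr)^{-N_0},\]
hence
\[1+\frac{r'}{\rho(x')}\le 1+C_0\,\frac{r'}{\rho(x)}\Bigl(1+\frac{r}{\rho(x)}\Bigr)^{N_0}\le C_0\Bigl(1+\frac{r}{\rho(x)}\Bigr)^{N_0+1}.\]
Using $b\in\mathrm{BMO}_\theta(\rho)$ we then obtain, for every such $Q'$,
\[\frac{1}{|Q'|}\int_{Q'}|b-b_{Q'}|\le \|b\|_{\mathrm{BMO}_\theta(\rho)}\Bigl(1+\frac{r'}{\rho(x')}\Bigr)^{\theta}\le C_0^{\theta}\,\|b\|_{\mathrm{BMO}_\theta(\rho)}\Bigl(1+\frac{r}{\rho(x)}\Bigr)^{\theta(N_0+1)}.\]
Taking the supremum over all subcubes $Q'\subseteq Q$, the localized $\mathrm{BMO}$ constant $\mathcal N_Q(b):=\sup\{|R|^{-1}\int_R|b-b_R|:R\subseteq Q\text{ cube}\}$ satisfies $\mathcal N_Q(b)\le C_0^{\theta}\|b\|_{\mathrm{BMO}_\theta(\rho)}(1+r/\rho(x))^{\theta(N_0+1)}$.

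\emph{Step 2: from local $\mathrm{BMO}$ to the Luxemburg norm.} We may assume $A:=\mathcal N_Q(b)>0$, since otherwise $b\equiv b_Q$ a.e.\ on $Q$ and the estimate is trivial. The classical local John--Nirenberg inequality provides dimensional constants $c_1,c_2>0$ with $|\{y\in Q:|b(y)-b_Q|>\lambda\}|\le c_1 e^{-c_2\lambda/A}|Q|$ for all $\lambda>0$. Writing $\tilde\Phi(s)=e^{s}-1=\int_0^{s}e^{t}\,dt$ and using Fubini, for $\lambda_0:=2A/c_2$ we get
\[\frac{1}{|Q|}\int_Q\tilde\Phi\Bigl(\frac{|b-b_Q|}{\lambda_0}\Bigr)=\frac{1}{|Q|}\int_0^\infty e^{t}\,\bigl|\{y\in Q:|b-b_Q|>t\lambda_0\}\bigr|\,dt\le c_1\int_0^\infty e^{-t}\,dt=c_1,\]
and rescaling by $\max\{1,c_1\}$ via the convexity of $\tilde\Phi$ (exactly as in the proof of Lemma~\ref{lema: con pesos menor que sin pesos}) yields $\|b-b_Q\|_{\tilde\Phi,Q}\le c_n A$ for a dimensional constant $c_n$. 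Combining with Step~1 proves the lemma with $\sigma=\theta(N_0+1)$ and $C=c_n C_0^{\theta}$.

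The main obstacle is the bookkeeping in Step~1: one must check that the correction factor $(1+r'/\rho(x'))^{\theta}$ attached to an \emph{arbitrarily small} subcube $Q'$ — whose center $x'$ may be far from $x$ on the scale of $\rho$ — is nevertheless absorbed into a single power of $1+r/\rho(x)$. The left-hand inequality in \eqref{eq: constantesRho} is exactly what prevents $\rho(x')$ from collapsing relative to $\rho(x)$ and makes this absorption possible; once it is in place, the remaining argument is the standard John--Nirenberg scheme, which uses no global information about $b$ and hence transfers verbatim to this setting.
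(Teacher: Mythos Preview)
Your argument is correct. The paper does not supply its own proof of this lemma but simply quotes it from \cite{BHS11}, so there is nothing to compare at the level of details; your two-step scheme---first transferring the $\mathrm{BMO}_\theta(\rho)$ control on an arbitrary subcube $Q'\subseteq Q$ to a single factor $(1+r/\rho(x))^{\theta(N_0+1)}$ via \eqref{eq: constantesRho}, then invoking the purely local John--Nirenberg inequality---is exactly the standard route, and it produces the exponent $\sigma=(N_0+1)\theta$, which is the same exponent the paper records in Proposition~\ref{propo: BM0 equivalente a BMOp} for the $L^{s}$ version of the same estimate.
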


The following result establishes an important estimate that will be a key for our purposes. A proof can be found in \cite{BPQ24}.

\begin{lema}\label{lema: lema fundamental}
	Let $u\in A_1^\rho$ and $v\in A_\infty^{\rho}(u)$. Then, there exist $C>0$ and $\theta\geq 0$ such that the following inequality
	\[\frac{uv(Q)}{v(Q)}\leq C\left(1+\frac{r}{\rho(x)}\right)^{\theta}\inf_Q u\]
	holds for every cube $Q=Q(x,r)$.
\end{lema}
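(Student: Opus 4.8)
\medskip
\noindent\textbf{Proof proposal.} The plan is to derive the estimate from a level-set decomposition of $Q$ adapted to the weight $u$, combining two self-improvement properties of the weights with the relative $A_\infty^\rho$ condition on $v$. Fix a cube $Q=Q(x,r)$, write $A=1+r/\rho(x)$ and $m=\inf_Q u$, and let $\Lambda>1$ be a parameter to be fixed at the end (it will be of the form $\Lambda\approx A^{\theta}$). Split $Q=(Q\setminus E)\cup E$, where $E=\{y\in Q:\,u(y)>\Lambda m\}$. On $Q\setminus E$ we have $u\le\Lambda m$, so $\int_{Q\setminus E}uv\le\Lambda m\,v(Q)$, which is already of the desired shape; thus the whole point is to show that $E$ carries at most half of the $uv$-mass, i.e.\ $(uv)(E)\le\tfrac12(uv)(Q)$. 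Granting this, $(uv)(Q)=(uv)(Q\setminus E)+(uv)(E)\le\Lambda m\,v(Q)+\tfrac12(uv)(Q)$, hence $(uv)(Q)\le 2\Lambda m\,v(Q)$, which is exactly the assertion, with the power of $A$ coming from $\Lambda$. (If one only had $v\in A_1^\rho(u)$ the argument would be immediate, but $A_\infty^\rho(u)$ forces us to handle every exponent $q$.)

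To bound $(uv)(E)$ I would first control the relative $u$-size of $E$. Since $u\in A_1^\rho$, it satisfies a reverse Hölder inequality, say $u\in\mathrm{RH}_s^\rho$ for some $s>1$. Combining Hölder's inequality in the form $\int_E u\le\big(\int_Q u^s\big)^{1/s}|E|^{1/s'}$, this reverse Hölder bound, Chebyshev's inequality $|E|\le u(Q)/(\Lambda m)$, and the $A_1^\rho$ bound $u(Q)/|Q|\le C\,A^{\theta_1}m$, one obtains
\[\frac{u(E)}{u(Q)}\le C\,A^{\theta_2}\,\Lambda^{-1/s'}.\]
Next, since $v\in A_\infty^\rho(u)$ it lies in $A_q^\rho(u)$ for some $q\ge1$, and hence (the $\rho$-perturbed analogue of the classical characterization of $A_\infty$ for the measure $vu\,dx$ relative to $u\,dx$) there exist $C,\delta>0$ and $\theta_3\ge0$ such that
\[\frac{(uv)(F)}{(uv)(Q)}\le C\,A^{\theta_3}\Big(\frac{u(F)}{u(Q)}\Big)^{\delta}\qquad\text{for every }F\subset Q.\]
Applying this with $F=E$ and inserting the previous bound gives $(uv)(E)\le C\,A^{\theta_3+\delta\theta_2}\,\Lambda^{-\delta/s'}(uv)(Q)$; choosing $\Lambda$ comparable to $A^{\,s'(\theta_3/\delta+\theta_2)}$ (large enough, depending only on the weight constants) makes the prefactor at most $\tfrac12$, and tracking the exponents yields the lemma with $\theta=s'(\theta_3/\delta+\theta_2)$.

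The routine parts are the Hölder--Chebyshev estimate for $u(E)/u(Q)$ and the final absorption step. The delicate point is the relative $A_\infty^\rho(u)$ characterization in terms of subsets $F\subset Q$: this is the $\rho$-version of the standard fact that a weight in $A_q(\mu)$ satisfies the $\varepsilon$-power (equivalently, exponential-decay) estimate with respect to $\mu$. I would isolate it as a preliminary lemma, proving it first on subcritical cubes $Q=Q(x,r)$ with $r\le\rho(x)$ --- where the Schrödinger weight classes behave like the classical Muckenhoupt ones and the usual argument applies --- and then propagating to arbitrary cubes at the cost of extra powers of $A$ by means of \eqref{eq: constantesRho}; alternatively, this characterization may simply be quoted from \cite{BPQ24}. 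With it available, the level-set argument above completes the proof.
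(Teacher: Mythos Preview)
The paper does not supply its own proof of this lemma: immediately after the statement it writes ``A proof can be found in \cite{BPQ24}'' and moves on, so there is no argument here to compare against line by line. Your level-set/absorption scheme is a standard and correct route to results of this type, and the bookkeeping you sketch (H\"older + reverse H\"older + Chebyshev to get $u(E)/u(Q)\lesssim A^{\theta_2}\Lambda^{-1/s'}$, then the relative $A_\infty$ power estimate to pass to $(uv)(E)/(uv)(Q)$, then absorption) is sound; in particular your identification of the exponent $\theta=s'(\theta_3/\delta+\theta_2)$ is consistent once the constants are tracked.

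The only genuine load-bearing ingredient you have not proved is the $\rho$-perturbed relative $A_\infty$ characterization
\[
\frac{(uv)(F)}{(uv)(Q)}\le C\,A^{\theta_3}\Big(\frac{u(F)}{u(Q)}\Big)^{\delta},\qquad F\subset Q,
\]
and you are right to flag it as the delicate point. Your proposed strategy for it (establish it on subcritical cubes, where the $A_p^{\rho}(u)$ condition coincides with the classical $A_p(u)$ condition and the usual reverse H\"older/self-improvement machinery applies, then propagate to general cubes at the cost of powers of $A$ via \eqref{eq: constantesRho}) is the natural one and is exactly how such statements are handled in this setting; alternatively, since the paper already defers the entire lemma to \cite{BPQ24}, quoting the needed characterization from that reference is equally legitimate. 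With that lemma in hand, your argument is complete.
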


The following two propositions show that a stronger smoothness property than \eqref{eq: condicion de suavidad (inf,delta)} or \eqref{suav-horm} can be respectively obtained. A proof for the second one can be found in \cite{BHQ19}.

\begin{propo}\label{propo: suavidad mas fuerte}
    Let $0<\delta\leq 1$ and $K$ be a kernel associated to a SCZO of $(\infty,\delta)$ type. Then for every $0<\tilde\delta<\delta$ and $N>0$ there exists a positive constant $C$ such that the inequality
    \[|K(x,y)-K(x,y_0)|\leq C\frac{|y-y_0|^{\tilde\delta}}{|x-y|^{n+\tilde\delta}}\left(1+\frac{|x-y|}{\rho(x)}\right)^{-N}\]
    holds whenever $|x-y|>2|y-y_0|$.
\end{propo}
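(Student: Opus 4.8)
The plan is to interpolate between the two available estimates for the kernel: the H\"older-type smoothness condition \eqref{eq: condicion de suavidad (inf,delta)}, which provides the gain $|y-y_0|^{\delta}$ but no decay in $\rho$, and the size condition \eqref{eq: condicion de tamaño (inf,delta)}, which provides arbitrary polynomial decay $(1+|x-y|/\rho(x))^{-N}$ but no gain from $|y-y_0|$. Fix $x,y,y_0$ with $|x-y|>2|y-y_0|$; we may assume $K(x,y)\neq K(x,y_0)$, since otherwise there is nothing to prove. First I would record the elementary geometric facts that follow from $|y-y_0|<|x-y|/2$, namely $|x-y|/2\le |x-y_0|\le 3|x-y|/2$ and hence $1+|x-y_0|/\rho(x)\ge \tfrac12\bigl(1+|x-y|/\rho(x)\bigr)$. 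Applying \eqref{eq: condicion de tamaño (inf,delta)} at both $(x,y)$ and $(x,y_0)$ and using these comparisons, one gets, for every $M>0$,
\[
|K(x,y)-K(x,y_0)|\le |K(x,y)|+|K(x,y_0)|\le \frac{C_M}{|x-y|^{n}}\left(1+\frac{|x-y|}{\rho(x)}\right)^{-M}=:B_M(x,y),
\]
while \eqref{eq: condicion de suavidad (inf,delta)} gives $|K(x,y)-K(x,y_0)|\le A(x,y):=C\,|y-y_0|^{\delta}/|x-y|^{n+\delta}$.

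Next I would split $|K(x,y)-K(x,y_0)|=|K(x,y)-K(x,y_0)|^{\tilde\delta/\delta}\cdot|K(x,y)-K(x,y_0)|^{1-\tilde\delta/\delta}$, estimating the first factor by $A(x,y)^{\tilde\delta/\delta}$ and the second by $B_M(x,y)^{1-\tilde\delta/\delta}$. Multiplying, the power of $|y-y_0|$ becomes $\delta\cdot(\tilde\delta/\delta)=\tilde\delta$, the power of $(1+|x-y|/\rho(x))^{-1}$ becomes $M(1-\tilde\delta/\delta)$, and the power of $|x-y|$ in the denominator becomes $(n+\delta)\tfrac{\tilde\delta}{\delta}+n\bigl(1-\tfrac{\tilde\delta}{\delta}\bigr)=n+\tilde\delta$, which is exactly what is required. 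Thus
\[
|K(x,y)-K(x,y_0)|\le C\,\frac{|y-y_0|^{\tilde\delta}}{|x-y|^{n+\tilde\delta}}\left(1+\frac{|x-y|}{\rho(x)}\right)^{-M(1-\tilde\delta/\delta)},
\]
and given the target exponent $N$ it suffices to take $M$ so large that $M(1-\tilde\delta/\delta)\ge N$, that is $M\ge N\delta/(\delta-\tilde\delta)$, and relabel constants.

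There is no real obstacle here: the argument reduces to a single interpolation step once the size bound has been transferred from the pole $y_0$ to the pole $y$, and it is worth emphasizing that it works uniformly for all $x,y,y_0$ with $|x-y|>2|y-y_0|$, so no splitting into subcritical and supercritical regimes is needed. The only points deserving a little care are the geometric comparison constants in the first step (so that the $\rho$-decay factor evaluated at $y_0$ is genuinely comparable to the one evaluated at $y$) and checking that the bookkeeping of exponents produces precisely $n+\tilde\delta$; both are routine.
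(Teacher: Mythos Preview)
Your proof is correct and follows essentially the same route as the paper: both interpolate by writing $|K(x,y)-K(x,y_0)|$ as a product of two powers of itself, bounding one factor via the smoothness estimate \eqref{eq: condicion de suavidad (inf,delta)} and the other via the size estimate \eqref{eq: condicion de tamaño (inf,delta)} (after transferring the pole from $y_0$ to $y$ using $|x-y_0|\ge |x-y|/2$). The paper parametrizes the split by $\sigma$ with $\tilde\delta=\delta(1-\sigma)$, which is exactly your $1-\tilde\delta/\delta$; its choice $N_0=N/\sigma$ coincides with your $M=N\delta/(\delta-\tilde\delta)$.
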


\begin{proof}
    Fix $0<\tilde\delta<\delta$ and $N>0$, where $\delta$ is the parameter given by \eqref{eq: condicion de suavidad (inf,delta)}. Choose $0<\sigma<1$ such that $\tilde\delta=\delta(1-\sigma)$. By using \eqref{eq: condicion de tamaño (inf,delta)} with $N_0=N/\sigma$, observe that
    \begin{align*}
      |K(x,y)-K(x,y_0)|^\sigma&\leq |K(x,y)|^\sigma+|K(x,y_0)|^\sigma\\
      &\leq \frac{C_{N_0}^\sigma}{|x-y|^{n\sigma}}\left(1+\frac{|x-y|}{\rho(x)}\right)^{-N_0\sigma}+\frac{C_{N_0}^\sigma}{|x-y_0|^{n\sigma}}\left(1+\frac{|x-y_0|}{\rho(x)}\right)^{-N_0\sigma}\\
      &\leq \frac{C_{N_0}^\sigma}{|x-y|^{n\sigma}}\left(1+\frac{|x-y|}{\rho(x)}\right)^{-N_0\sigma}+\frac{2^{n\sigma}C_{N_0}^\sigma}{|x-y|^{n\sigma}}\left(1+\frac{|x-y|}{2\rho(x)}\right)^{-N_0\sigma}\\
      &\leq (1+2^{(n-N_0)\sigma}) \frac{C_{N_0}^\sigma}{|x-y|^{n\sigma}}\left(1+\frac{|x-y|}{\rho(x)}\right)^{-N},
    \end{align*}
    since $|x-y|>2|y-y_0|$ implies that $2|x-y_0|>|x-y|$. On the other hand, by \eqref{eq: condicion de suavidad (inf,delta)} we get
    \[|K(x,y)-K(x,y_0)|^{1-\sigma}\leq C^{1-\sigma}\frac{|y-y_0|^{\delta(1-\sigma)}}{|x-y|^{(n+\delta)(1-\sigma)}}.\]
    Combining the last two estimates above we obtain
    \[|K(x,y)-K(x,y_0)|\leq C\frac{|y-y_0|^{\tilde\delta}}{|x-y|^{n+\tilde\delta}}\left(1+\frac{|x-y|}{\rho(x)}\right)^{-N}.\qedhere\]
\end{proof}

\begin{propo}\label{propo: suavidad mas fuerte (s,delta)}
    Let $0<\delta\leq 1$, $1<s<\infty$ and $K$ be a kernel associated to a SCZO of $(s,\delta)$ type. Then for every $0<\tilde\delta<\delta$ and $N>0$ there exists a positive constant $C$ such that the inequality
    \[
	\left( \int_{R<|x-y_0|<2R} |K(x,y)-K(x,y_0)|^{s}\,dx\right)^{1/s} 
	\leq C R^{-n/s'} \left(\frac{r}{R}\right)^{\tilde\delta}\left(1+\frac{R}{\rho(y_0)}\right)^{-N}
    \]
    holds for $|y-y_0|<r\leq \rho (y_0)$,  $r<R/2$. 
\end{propo}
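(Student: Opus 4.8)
The plan is to argue exactly as in the proof of Proposition~\ref{propo: suavidad mas fuerte}, interpolating between the size estimate \eqref{TamHorm} and the smoothness estimate \eqref{suav-horm}. Fix $0<\tilde\delta<\delta$ and $N>0$, and choose $\sigma\in(0,1)$ so that $\tilde\delta=\delta(1-\sigma)$, i.e. $\sigma=(\delta-\tilde\delta)/\delta$. Abbreviate
\[
F:=\left(\int_{R<|x-y_0|<2R}|K(x,y)-K(x,y_0)|^s\,dx\right)^{1/s},
\]
and write $F=F^{1-\sigma}F^{\sigma}$; the idea is to control the first factor by the smoothness bound raised to the power $1-\sigma$ and the second by the size bound raised to the power $\sigma$.

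For the smoothness bound, since $|y-y_0|<r\le\rho(y_0)$ and $r<R/2$, hypothesis \eqref{suav-horm} applies verbatim and gives $F\le C\,R^{-n/s'}(r/R)^{\delta}$. For the size bound, I would apply \eqref{TamHorm} with $x_0=y_0$ and with parameter $N/\sigma$ in place of $N$: this is legitimate for the kernel $x\mapsto K(x,y)$ because $|y-y_0|<r<R/2$, and for $x\mapsto K(x,y_0)$ because $|y_0-y_0|=0<R/2$. Hence, by the triangle inequality in $L^s$ of the annulus,
\[
F\le \left(\int_{R<|x-y_0|<2R}|K(x,y)|^s\,dx\right)^{1/s}+\left(\int_{R<|x-y_0|<2R}|K(x,y_0)|^s\,dx\right)^{1/s}\le 2C_{N/\sigma}\,R^{-n/s'}\left(1+\frac{R}{\rho(y_0)}\right)^{-N/\sigma}.
\]

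Combining the two estimates,
\[
F=F^{1-\sigma}F^{\sigma}\le \Big(C\,R^{-n/s'}(r/R)^{\delta}\Big)^{1-\sigma}\Big(2C_{N/\sigma}\,R^{-n/s'}(1+R/\rho(y_0))^{-N/\sigma}\Big)^{\sigma}=C'\,R^{-n/s'}\,(r/R)^{\tilde\delta}\,\big(1+R/\rho(y_0)\big)^{-N},
\]
where $C'=C^{1-\sigma}(2C_{N/\sigma})^{\sigma}$, and I have used $\delta(1-\sigma)=\tilde\delta$, $(-N/\sigma)\sigma=-N$, and that the two copies of $R^{-n/s'}$ combine with exponents $1-\sigma$ and $\sigma$. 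This is precisely the asserted inequality.

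I do not expect a genuine obstacle here: the argument is a direct adaptation of Proposition~\ref{propo: suavidad mas fuerte} with integrals over annuli replacing pointwise bounds. The only points needing a little care are the bookkeeping of exponents — in particular, feeding $N/\sigma$ (rather than $N$) into \eqref{TamHorm}, so that raising $(1+R/\rho(y_0))^{-N/\sigma}$ to the power $\sigma$ yields exactly $(1+R/\rho(y_0))^{-N}$ — and verifying that the localization hypothesis $|y-x_0|<R/2$ of \eqref{TamHorm} is met, which is immediate from the standing assumptions $|y-y_0|<r\le\rho(y_0)$ and $r<R/2$.
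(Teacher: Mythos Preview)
Your argument is correct: writing $F=F^{1-\sigma}F^{\sigma}$ and bounding the two factors respectively by \eqref{suav-horm} and by \eqref{TamHorm} (the latter with $x_0=y_0$ and exponent $N/\sigma$, which is legitimate since $|y-y_0|<r<R/2$ and $|y_0-y_0|=0<R/2$) gives exactly the claimed estimate after the arithmetic $\delta(1-\sigma)=\tilde\delta$ and $(N/\sigma)\sigma=N$.

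As for comparison with the paper: the paper does not actually prove this proposition but refers to \cite{BHQ19}. Your proof is the natural $L^s$-annulus analogue of the pointwise interpolation argument the paper \emph{does} give for Proposition~\ref{propo: suavidad mas fuerte} in the $(\infty,\delta)$ case, and it is entirely self-contained; there is nothing to add.
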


\smallskip

The following proposition establishes a useful consequence of the John-Niremberg inequality for the $\text{BMO}_\theta(\rho)$ spaces. A proof can be found in \cite{BHS11}.

\begin{propo}\label{propo: BM0 equivalente a BMOp}
    Let $\theta>0$, $1<s<\infty$ and $b\in \text{BMO}_\theta(\rho)$. Then  the inequality
    \[\left(\frac{1}{|Q|}\int_Q|b(x)-b_Q|^s\right)^{1/s}\leq \|b\|_{\text{BMO}_\theta(\rho)}\left(1+\frac{r}{\rho(x)}\right)^{(N_0+1)\theta}\]
    holds for every cube $Q=Q(x,r)$, where $N_0$ is the constant that appears in \eqref{eq: constantesRho}.
\end{propo}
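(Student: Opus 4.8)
The plan is to deduce the estimate from the classical (local) John--Nirenberg inequality applied on the single cube $Q$. The obstruction to doing this directly is that the defining condition of $\mathrm{BMO}_\theta(\rho)$ only controls the mean oscillation of $b$ on a subcube $Q'=Q(x',r')$ by $\|b\|_{\mathrm{BMO}_\theta(\rho)}(1+r'/\rho(x'))^\theta$, and a priori the factor $(1+r'/\rho(x'))^\theta$ need not be comparable to $(1+r/\rho(x))^\theta$. The first step is therefore to show that, as long as $Q'\subseteq Q=Q(x,r)$, this factor is dominated by a fixed power of $1+r/\rho(x)$ --- precisely the $(N_0+1)$-th power --- so that on $Q$ the function $b$ behaves like an ordinary $\mathrm{BMO}$ function whose norm is $\|b\|_{\mathrm{BMO}_\theta(\rho)}$ times that weight.

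For the geometric step, fix $Q=Q(x,r)$ and let $Q'=Q(x',r')$ be any cube with $Q'\subseteq Q$. Then $r'\le r$ and $x'\in Q$, so $|x-x'|\le r$ (up to the dimensional comparison between radius and side length). Since $t\mapsto(1+t)^{-N_0}$ is decreasing, the left inequality in \eqref{eq: constantesRho} with $y=x'$ gives
\[
\rho(x')\ \ge\ C_0^{-1}\rho(x)\Bigl(1+\tfrac{|x-x'|}{\rho(x)}\Bigr)^{-N_0}\ \ge\ C_0^{-1}\rho(x)\Bigl(1+\tfrac{r}{\rho(x)}\Bigr)^{-N_0},
\]
whence
\[
\frac{r'}{\rho(x')}\ \le\ \frac{r}{\rho(x')}\ \le\ C_0\,\frac{r}{\rho(x)}\Bigl(1+\tfrac{r}{\rho(x)}\Bigr)^{N_0}\ \le\ C_0\Bigl(1+\tfrac{r}{\rho(x)}\Bigr)^{N_0+1},
\]
so that $1+r'/\rho(x')\le(1+C_0)\bigl(1+r/\rho(x)\bigr)^{N_0+1}$. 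Inserting this into the definition of $\mathrm{BMO}_\theta(\rho)$ yields, for \emph{every} subcube $Q'\subseteq Q$,
\[
\frac{1}{|Q'|}\int_{Q'}|b-b_{Q'}|\ \le\ \|b\|_{\mathrm{BMO}_\theta(\rho)}\Bigl(1+\tfrac{r'}{\rho(x')}\Bigr)^{\theta}\ \le\ (1+C_0)^{\theta}\|b\|_{\mathrm{BMO}_\theta(\rho)}\Bigl(1+\tfrac{r}{\rho(x)}\Bigr)^{(N_0+1)\theta}=:B.
\]

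With this uniform bound in hand the rest is standard. Since $b$ has mean oscillation at most $B$ on every dyadic subcube of $Q$, the classical John--Nirenberg inequality provides dimensional constants $c_1,C_1>0$ with
\[
\bigl|\{y\in Q:\ |b(y)-b_Q|>\lambda\}\bigr|\ \le\ C_1\,e^{-c_1\lambda/B}\,|Q|,\qquad\lambda>0,
\]
and then, integrating the distribution function,
\[
\frac{1}{|Q|}\int_Q|b-b_Q|^s\ =\ \frac{s}{|Q|}\int_0^\infty\lambda^{s-1}\bigl|\{y\in Q:|b(y)-b_Q|>\lambda\}\bigr|\,d\lambda\ \le\ C_1\,s\,\Gamma(s)\,c_1^{-s}\,B^s .
\]
Taking $s$-th roots gives $\bigl(\tfrac1{|Q|}\int_Q|b-b_Q|^s\bigr)^{1/s}\le C\,B$ with $C=C(n,s)$, which is the desired inequality (with an implicit constant $C=C(n,s,\theta,C_0)$ multiplying $\|b\|_{\mathrm{BMO}_\theta(\rho)}$).

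The only point requiring care is the bookkeeping in the geometric step: one must verify the oscillation bound for \emph{all} subcubes of $Q$ simultaneously, and the sharp exponent $N_0+1$ arises precisely from the worst case, where $r'$ is comparable to $r$ while $\rho(x')$ is as small as the lower bound in \eqref{eq: constantesRho} permits, i.e. $\rho(x')\approx\rho(x)(1+r/\rho(x))^{-N_0}$. Once $B$ is identified, the remainder does not involve $\rho$ at all and is the usual Calderón--Zygmund stopping-time proof of John--Nirenberg on a fixed cube. (Alternatively one could read the estimate off from the exponential bound of Lemma~\ref{lema: control de norma exp por BMO} together with the elementary inclusion $\exp L\hookrightarrow L^s$ on $Q$, but then one has to reconcile the exponent $\sigma$ appearing there with $(N_0+1)\theta$; the direct route above keeps the exponent explicit.)
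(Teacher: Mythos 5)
Your argument is correct. The paper itself offers no proof of this proposition (it is quoted from \cite{BHS11}), so there is nothing to compare line by line; judged on its own, your two-step strategy is sound and self-contained. The geometric step is the crux and you carry it out correctly: for any subcube $Q'=Q(x',r')\subseteq Q(x,r)$ one has $|x-x'|\le r$ and $r'\le r$, and the lower bound in \eqref{eq: constantesRho} gives $\rho(x')\ge C_0^{-1}\rho(x)(1+r/\rho(x))^{-N_0}$, whence $1+r'/\rho(x')\lesssim(1+r/\rho(x))^{N_0+1}$; this is exactly where the exponent $(N_0+1)\theta$ in the statement comes from, and the uniform oscillation bound $B$ over all (in particular all dyadic) subcubes of $Q$ is precisely the hypothesis needed to run the classical John--Nirenberg stopping-time argument on the fixed cube $Q$. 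The layer-cake computation that converts the exponential distributional estimate into the $L^s$ bound is also correct. One remark: as you yourself note, the inequality cannot hold with constant $1$ as literally printed in the statement (already false for classical BMO and $s>1$ by Jensen), so a multiplicative constant $C=C(n,s,\theta,C_0)$ is unavoidable; this is harmless for every use the paper makes of the proposition, where $\|b\|_{\mathrm{BMO}_\theta(\rho)}$ is normalized to $1$ and absolute constants are absorbed, but it is worth stating explicitly that the proposition should be read with such a constant.
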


The following lemma is a extension of the classical result proved, for example, in \cite{Berra-Carena-Pradolini(M)} and we omit it.
\begin{lema}\label{lema: diferencia de promedios acotada por norma BMO}
    Let $b\in\text{BMO}_\theta(\rho)$, $Q=Q(x,r)$ be a cube and $k\in \mathbb{N}$. Then the following estimate holds
    \[|b_Q-b_{2^kQ}|\leq Ck\left(1+\frac{2^kr}{\rho(x)}\right)^{\theta}\|b\|_{\text{BMO}_\theta(\rho)}.\]
\end{lema}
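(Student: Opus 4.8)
The plan is to mimic the classical telescoping argument for the oscillation of a BMO function across dyadic dilates of a cube, keeping track of the $\rho$-dependent factor. First I would write, by the triangle inequality,
\[|b_Q-b_{2^kQ}|\leq \sum_{j=0}^{k-1}|b_{2^jQ}-b_{2^{j+1}Q}|,\]
so that it suffices to estimate each consecutive difference. Since $2^jQ\subseteq 2^{j+1}Q$ and $|2^{j+1}Q|=2^n|2^jQ|$, for every $j$ we have
\[|b_{2^jQ}-b_{2^{j+1}Q}|\leq \frac{1}{|2^jQ|}\int_{2^jQ}|b-b_{2^{j+1}Q}|\leq \frac{2^n}{|2^{j+1}Q|}\int_{2^{j+1}Q}|b-b_{2^{j+1}Q}|.\]

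Next, the cube $2^{j+1}Q$ is centered at $x$ with radius $2^{j+1}r$, so the defining inequality of $\mathrm{BMO}_\theta(\rho)$ applied to $2^{j+1}Q$ gives
\[\frac{1}{|2^{j+1}Q|}\int_{2^{j+1}Q}|b-b_{2^{j+1}Q}|\leq \|b\|_{\mathrm{BMO}_\theta(\rho)}\left(1+\frac{2^{j+1}r}{\rho(x)}\right)^{\theta}.\]
Since $j+1\leq k$ and the map $t\mapsto(1+t)^\theta$ is nondecreasing on $[0,\infty)$, the last factor is at most $\left(1+\frac{2^kr}{\rho(x)}\right)^{\theta}$. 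Therefore each term of the telescoping sum is bounded by $2^n\|b\|_{\mathrm{BMO}_\theta(\rho)}\left(1+\frac{2^kr}{\rho(x)}\right)^{\theta}$, and summing over the $k$ values of $j$ yields
\[|b_Q-b_{2^kQ}|\leq 2^n k\,\|b\|_{\mathrm{BMO}_\theta(\rho)}\left(1+\frac{2^kr}{\rho(x)}\right)^{\theta},\]
which is exactly the claim with $C=2^n$.

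There is no genuine obstacle here: the argument is the standard chain estimate, and the only point needing attention beyond the classical case is that all the intermediate scales $2^{j+1}r$, $0\leq j\leq k-1$, must be dominated by the largest one $2^kr$, which is immediate from the monotonicity of $(1+t)^\theta$. For this reason the authors are justified in omitting the proof, as stated in the excerpt.
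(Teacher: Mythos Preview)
Your proof is correct and is precisely the standard telescoping argument that the paper alludes to (the classical result in \cite{Berra-Carena-Pradolini(M)}) and explicitly omits. There is nothing to compare: the paper provides no proof of its own, and your argument is the expected one.
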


\section{Proof of main results}

We devote this section to the proofs of our main theorems. 

Let us first state a result about weighted strong type inequality for $T_b$, which is an easy consequence on Theorem 8 in \cite{BCH13} and Theorem 4.1 in \cite{VSH20}.

\begin{teo}\label{teo:tipo fuerte del conmutador}
 Let $1<p<\infty$, $\delta\in (0,1]$, $b\in BMO(\rho)$ and $T$ be a SCZO of $(\infty,\delta)$ type. Then there exists a positive constant $C$ such that the inequality
 \[
 \int_{\mathbb{R}^n}\left|T_bf(x)\right|^p\, w(x)\, dx\le C\int_{\mathbb{R}^n}|f(x)|^p \, w(x)\, dx
 \]
holds  for every $w\in A_p^{\rho}$.
\end{teo}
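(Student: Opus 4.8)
The plan is to obtain the estimate by combining the weighted $L^{p}$ boundedness of the operator $T$ itself with the Schrödinger analogue of the Coifman--Rochberg--Weiss conjugation method. First I would recall that, by Theorem 8 in \cite{BCH13}, a $(\infty,\delta)-SCZO$ $T$ is bounded on $L^{q}(w)$ for every $1<q<\infty$ and every $w\in A_{q}^{\rho}$; the hypotheses of that theorem are met because of the size condition \eqref{eq: condicion de tamaño (inf,delta)} together with the stronger smoothness furnished by Proposition \ref{propo: suavidad mas fuerte}.

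Next I would invoke Theorem 4.1 in \cite{VSH20}, which carries out the passage from a linear operator to its first order commutator in this framework: if $S$ is bounded on $L^{q}(w)$ for every $1<q<\infty$ and every $w\in A_{q}^{\rho}$, then for $b\in\text{BMO}(\rho)$ the commutator $[b,S]$ is bounded on $L^{p}(w)$ for every $1<p<\infty$ and $w\in A_{p}^{\rho}$, with constant depending on the $A_{p}^{\rho}$ constants of $w$, on $\|b\|_{\text{BMO}_\theta(\rho)}$ and on $\rho$. Applied with $S=T$ this gives the claim. For completeness, the mechanism behind Theorem 4.1 in \cite{VSH20} can be made explicit: fixing $\theta\ge 0$ with $b\in\text{BMO}_\theta(\rho)$ and using the Cauchy integral formula one writes, for $\varepsilon>0$,
\[[b,T]f(x)=\frac{1}{2\pi i}\int_{|z|=\varepsilon}\frac{e^{zb(x)}\,T(e^{-zb}f)(x)}{z^{2}}\,dz,\]
whence, by Minkowski's integral inequality,
\[\|[b,T]f\|_{L^{p}(w)}\leq \frac{1}{2\pi\varepsilon^{2}}\int_{|z|=\varepsilon}\big\|T(e^{-zb}f)\big\|_{L^{p}(e^{p\,\mathrm{Re}(zb)}w)}\,|dz|.\]
The point is that there exists $\varepsilon_{0}>0$, depending only on $\rho$, on $\|b\|_{\text{BMO}_\theta(\rho)}$ and on the $A_{p}^{\rho}$ constants of $w$, such that $e^{p\,\mathrm{Re}(zb)}w\in A_{p}^{\rho}$ with uniformly bounded constant whenever $|z|\leq\varepsilon_{0}$. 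Taking $\varepsilon=\varepsilon_{0}$, applying the weighted bound for $T$ on each $L^{p}(e^{p\,\mathrm{Re}(zb)}w)$, and using that $|e^{-zb}|^{p}e^{p\,\mathrm{Re}(zb)}\equiv 1$, we get $\|[b,T]f\|_{L^{p}(w)}\leq C\|f\|_{L^{p}(w)}$.

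The only step that is not purely formal is the perturbation statement $e^{p\,\mathrm{Re}(zb)}w\in A_{p}^{\rho}$ with controlled constant, which I expect to be the main obstacle: since the $A_{p}^{\rho}$ classes carry the extra factor $(1+r/\rho(x))^{\theta}$, one cannot quote the classical self-improvement of the Muckenhoupt condition verbatim, and one must instead rely on a John--Nirenberg type inequality for $\text{BMO}_\theta(\rho)$ in the spirit of Proposition \ref{propo: BM0 equivalente a BMOp}. This, however, is precisely what Theorem 4.1 in \cite{VSH20} supplies, so once that result is at hand the argument is routine.
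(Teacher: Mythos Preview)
Your proposal is correct and follows exactly the route the paper indicates: the paper does not give a proof but simply states that the result ``is an easy consequence of Theorem 8 in \cite{BCH13} and Theorem 4.1 in \cite{VSH20}'', which are precisely the two ingredients you invoke. Your additional sketch of the Cauchy-integral conjugation mechanism behind \cite{VSH20} goes beyond what the paper provides but is consistent with, and more detailed than, the paper's one-line justification.
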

We shall also require the following mixed estimate for SCZO, proved in \cite{BPQ24}.

\begin{teo}\label{teo: mixta para T SCZO infinito, delta}
Let $\rho$ be a critical radius function, $u\in A_1^\rho$ and $v\in A_\infty^\rho(u)$. If $0<\delta\leq 1$ and $T$ is a SCZO of $(\infty,\delta)$ type, then the inequality
	\[uv\left(\left\{x\in\mathbb{R}^n: \frac{|T(fv)(x)|}{v(x)}>\lambda\right\}\right)\leq \frac{C}{\lambda}\int_{\mathbb{R}^n}|f(x)|\,u(x)v(x)dx\]
	holds for every positive $\lambda$.
\end{teo}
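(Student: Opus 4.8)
The plan is to adapt the direct Calderón–Zygmund argument of \cite{BCP-M} to the Schrödinger framework, replacing the classical decomposition by Lemma~\ref{lema: CZ decomposition} and keeping careful track of the critical–radius factors. By homogeneity we may take $\lambda=1$, and by density we may assume $f\ge 0$ is bounded with compact support. We shall use two structural facts. First, by Lemma~\ref{lema: propiedades de Ap,rho}\eqref{item: lema: propiedades de Ap,rho - item a}, $uv\in A_\infty^{\rho}$, so $uv\,dx$ satisfies the $\rho$–doubling condition \eqref{eq: propiedad de mu}; combining this with Lemma~\ref{lema: lema fundamental} and the $A_1^{\rho}$ condition for $u$ one also gets that $v\,dx$ satisfies \eqref{eq: propiedad de mu}. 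Second, since $v\in A_\infty^{\rho}(u)$ we have $v\in A_{q'}^{\rho}(u)$ for some $q'>1$; the class $A_p^{\rho}(u)$ is invariant under $w\mapsto w^{1-p'}$ (the defining condition \eqref{eq: clase Ap,rho,theta(u)} is symmetric under this substitution, as in Remark~\ref{obs: v en Ap implica v^{1-p'} en Ap'}), hence $v^{1-q}\in A_q^{\rho}(u)$ and therefore $W:=uv^{1-q}\in A_q^{\rho}$, again by Lemma~\ref{lema: propiedades de Ap,rho}\eqref{item: lema: propiedades de Ap,rho - item a}. In particular the weighted strong $(q,q)$ estimate for $T$ (Theorem~8 in \cite{BCH13}) gives $\|Tg\|_{L^q(W)}\lesssim\|g\|_{L^q(W)}$.

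The key reduction is to arrange that every Calderón–Zygmund cube is subcritical. Using a covering of $\mathbb{R}^n$ by critical cubes whose dilates have bounded overlap (a standard device for critical radius functions), one localizes $f$ to these cubes; the contributions of the pieces of $f$ far from their supporting critical cube are absorbed by the decay $(1+|x-y|/\rho(x))^{-N}$ in \eqref{eq: condicion de tamaño (inf,delta)}, and inside a fixed critical cube every subcube $Q=Q(x,r)$ has $r\lesssim\rho(x)$, so all factors $(1+r/\rho(x))^{\theta}$ are $\approx 1$. Thus we may assume that applying Lemma~\ref{lema: CZ decomposition} to $f$ with respect to $\mu=v\,dx$ at height $1$ produces disjoint subcritical cubes $P_j=Q(x_j,r_j)$ with $1<\langle f\rangle_{P_j,v}:=v(P_j)^{-1}\int_{P_j}f\,v\le C$, and $f\le 1$ a.e.\ off $\bigcup_jP_j$. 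Write $f=g+\sum_j b_j$, with $g=f\,\mathbf{1}_{(\cup_jP_j)^c}+\sum_j\langle f\rangle_{P_j,v}\,\mathbf{1}_{P_j}$ (so $0\le g\le C$) and $b_j=(f-\langle f\rangle_{P_j,v})\mathbf{1}_{P_j}$, which satisfies $\int b_j\,v=0$ and $\int_{P_j}|b_j|\,v\le 2\int_{P_j}f\,v$. Then
\[uv\Bigl(\Bigl\{\tfrac{|T(fv)|}{v}>1\Bigr\}\Bigr)\le uv\Bigl(\Bigl\{\tfrac{|T(gv)|}{v}>\tfrac12\Bigr\}\Bigr)+uv\Bigl(\Bigl\{\tfrac{|T(\sum_j b_jv)|}{v}>\tfrac12\Bigr\}\Bigr)=:\mathrm{I}+\mathrm{II}.\]

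For $\mathrm{I}$, Chebyshev's inequality with respect to $uv$ and exponent $q$ together with the $L^q(W)$–boundedness of $T$ give $\mathrm{I}\le 2^q\int|T(gv)|^q uv^{1-q}\lesssim\int|gv|^q uv^{1-q}=\int g^q uv$; since $0\le g\le C$, this is $\lesssim\int g\,uv=\int_{(\cup_jP_j)^c}f\,uv+\sum_j\langle f\rangle_{P_j,v}\,uv(P_j)$, where the first term is $\le\int f\,uv$ and, by Lemma~\ref{lema: lema fundamental} (with the $\rho$–factor $\approx 1$), $\langle f\rangle_{P_j,v}\,uv(P_j)=\tfrac{uv(P_j)}{v(P_j)}\int_{P_j}f\,v\lesssim(\inf_{P_j}u)\int_{P_j}f\,v\le\int_{P_j}f\,uv$; summing, $\mathrm{I}\lesssim\int f\,uv$. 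For $\mathrm{II}$, put $\Omega=\bigcup_j\beta P_j$ with $\beta=\beta(n)$ large enough that $x\notin\beta P_j$ forces $|x-x_j|>2\sup_{y\in P_j}|y-x_j|$. By $\rho$–doubling of $uv$, Lemma~\ref{lema: lema fundamental} and $\langle f\rangle_{P_j,v}>1$, $uv(\Omega)\le\sum_juv(\beta P_j)\lesssim\sum_j(\inf_{P_j}u)v(P_j)\lesssim\sum_j(\inf_{P_j}u)\int_{P_j}f\,v\le\int f\,uv$. On $\Omega^c$ we have
\[uv\Bigl(\Bigl\{x\notin\Omega:\tfrac{|T(\sum_j b_jv)(x)|}{v(x)}>\tfrac12\Bigr\}\Bigr)\le 2\int_{\Omega^c}\frac{|T(\sum_j b_jv)(x)|}{v(x)}\,uv(x)\,dx\le 2\sum_j\int_{(\beta P_j)^c}|T(b_jv)(x)|\,u(x)\,dx,\]
and for $x\notin\beta P_j$ the cancellation $\int b_jv=0$ gives $T(b_jv)(x)=\int_{P_j}(K(x,y)-K(x,x_j))b_j(y)v(y)\,dy$; Proposition~\ref{propo: suavidad mas fuerte}, with some $0<\tilde\delta<\delta$ and $N$ as large as needed, bounds this by $C\,r_j^{\tilde\delta}|x-x_j|^{-n-\tilde\delta}(1+|x-x_j|/\rho(x))^{-N}\int_{P_j}|b_j|v$. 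Splitting the remaining integral over the annuli $2^{k+1}P_j\setminus 2^kP_j$, using $u\in A_1^{\rho}$ and \eqref{eq: constantesRho} to pass from $\rho(x)$ to $\rho(x_j)$, the series $\sum_k 2^{-k\tilde\delta}(1+2^kr_j/\rho(x_j))^{\theta_1-N'}$ converges once $N'>\theta_1$, giving $\int_{(\beta P_j)^c}|T(b_jv)|\,u\lesssim(\inf_{P_j}u)\int_{P_j}f\,v\le\int_{P_j}f\,uv$. Summing in $j$ yields $\mathrm{II}\lesssim\int f\,uv$, and together with the bound for $\mathrm{I}$ this completes the proof.

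The step I expect to be the main obstacle is the reduction to subcritical cubes. In contrast with the classical case, the good function produced by Lemma~\ref{lema: CZ decomposition} is only bounded by $C(1+r_j/\rho(x_j))^{\sigma}$ on each stopping cube, and these growing factors cannot all be absorbed by the single power furnished by the lower Calderón–Zygmund bound; one therefore has to localize to a bounded-overlap covering by critical cubes and control the non-local action of $T$ across distinct critical cubes, which is exactly where the decay $(1+|x-y|/\rho(x))^{-N}$ of the kernel—and the strengthened smoothness of Proposition~\ref{propo: suavidad mas fuerte}, without which the annular sums would only be logarithmically summable—enter essentially. Once this localization is in place, the remaining estimates are routine adaptations of the arguments in \cite{BCP-M,CruzUribe-Martell-Perez}.
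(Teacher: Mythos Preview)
The paper does not prove this theorem here; it is quoted from \cite{BPQ24}. However, the structure of that proof is visible in the paper's own proof of Theorem~\ref{teo: mixta para SCZO inf,delta}, and it differs from your outline at exactly the point you flagged as the obstacle.

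Your reduction to subcritical cubes is a genuine gap. A mixed weak-type level set $\{|T(fv)|/v>\lambda\}$ does not localize cleanly under a decomposition $f=\sum_k f\chi_{Q_k^\ast}$: after splitting $T(fv)(x)=T(f_k^{\mathrm{loc}}v)(x)+T(f_k^{\mathrm{far}}v)(x)$ on each $\tilde Q_k^\ast$, the far piece is divided by $v(x)$, and there is no reason for $|T(f_k^{\mathrm{far}}v)(x)|/v(x)$ to be pointwise small, so a Chebyshev step costs you an $L^1(u)$ bound on $T(f_k^{\mathrm{far}}v)$ which must then be summed in $k$ with only bounded overlap of the $\tilde Q_k^\ast$---this requires nontrivial control of $\int_{\mathbb{R}^n\setminus CQ_k^\ast}|f|v\cdot(\text{something})$ that you have not supplied. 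Moreover, even granting the localization, applying Lemma~\ref{lema: CZ decomposition} to $f_k^{\mathrm{loc}}$ does not force the stopping cubes to be subcritical: the lemma gives averages between $(1+r_j/\rho(x_j))^{\theta}$ and $C(1+r_j/\rho(x_j))^{\sigma}$, which places no a priori bound on $r_j/\rho(x_j)$.

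The approach actually used avoids localization entirely. One applies Lemma~\ref{lema: CZ decomposition} globally and then splits the stopping cubes into $J_1=\{j:r_j\le\rho(x_j)\}$ and $J_2=\{j:r_j>\rho(x_j)\}$. On $J_1$ one sets $g=\langle f\rangle_{P_j,v}$ (bounded by $C\lambda$, since the factor $(1+r_j/\rho(x_j))^{\sigma}\le 2^{\sigma}$) and $h_1^j=f-\langle f\rangle_{P_j,v}$ with cancellation; your estimates for $\mathrm{I}$ and for the annular sum using Proposition~\ref{propo: suavidad mas fuerte} go through verbatim for these cubes. On $J_2$ one sets $g=0$ and $h_2=\sum_{j\in J_2}|f|\chi_{P_j}$ with \emph{no} cancellation. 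The point is that cancellation is unnecessary here: for $x\notin\tilde P_j$ one uses only the size bound \eqref{eq: condicion de tamaño (inf,delta)}, and the resulting annular sum $\sum_k(1+2^kr_j/\rho(x_j))^{-N/(N_0+1)+\theta}$ is geometrically convergent because $r_j>\rho(x_j)$ forces each term to be $\lesssim 2^{-k(N/(N_0+1)-\theta)}$. Combined with the lower bound $\langle f\rangle_{P_j,v}\ge(1+r_j/\rho(x_j))^{\theta}$ (with $\theta$ chosen large), this absorbs all the $\rho$-factors coming from $A_1^{\rho}$ and the $\rho$-doubling of $uv$. This is the missing idea: rather than forcing subcriticality, one exploits that supercritical cubes already carry decay through the kernel's size condition, so the smoothness/cancellation mechanism is simply not needed for them.
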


\begin{proof}[Proof of Theorem~\ref{teo: mixta para SCZO inf,delta}]
Fix $\lambda>0$. Since $b\in \text{BMO}(\rho)$, there exists $\gamma>0$ such that $b\in \text{BMO}_\gamma(\rho)$. We can assume, without loss of generality, that $\|b\|_{\text{BMO}_\gamma(\rho)}=1$, that is,
\[\left(1+\frac{r}{\rho(x)}\right)^{-\gamma}\frac{1}{|Q|}\int_Q|f-f_Q|\leq 1\]
for every cube $Q=Q(x,r)$.

Since our hypotheses on the weights imply that $v\in A_\infty^{\rho}$, by Remark~\ref{obs: Ap implica duplicante} we can apply Lemma~\ref{lema: CZ decomposition} to perform the Calderón-Zygmund decomposition of $|f|$ at level $\lambda$ with respect to the measure \mbox{$d\mu(x)=v(x)\,dx$ }with $\theta$ sufficiently large to be chosen later. Thus we obtain a sequence of cubes $\{Q_j\}_j$, $Q_j=Q(x_j,r_j)$ such that 
\begin{equation}\label{eq: teo: mixta para SCZO inf,delta - promedios de CZ como lambda}
    \left(1+\frac{r_j}{\rho(x_j)}\right)^{\theta}\lambda \leq \frac{1}{v(Q_j)}\int_{Q_j}|f|\,v\leq C \left(1+\frac{r_j}{\rho(x_j)}\right)^{\sigma}\lambda
\end{equation}
for some $C>0$, $\sigma\geq \theta$ and every $j$. Denoting $J_1=\{j: r_j\leq \rho(x_j)\}$ and $J_2=\mathbb{N}\setminus J_1$, we can define $\Omega_1=\bigcup_{j\in J_1}Q_j$ and $\Omega_2=\bigcup_{j\in J_2} Q_j$. We now write $|f|=g+h_1+h_2$, where
\[g(x)=\left\{\begin{array}{ccl}
    \frac{1}{v(Q_j)}\int_{Q_j}|f|\,v & \text{ if } & x\in Q_j, j\in J_1, \\
     0 & \text{ if } & x\in Q_j, j\in J_2, \\
     |f(x)| & \text{ if } & x\not\in(\Omega_1\cup \Omega_2), \\
\end{array}\right.\]
$h_1$ is given by
\[h_1(x)=\left\{\begin{array}{cl}
    |f(x)|-\frac{1}{v(Q_j)}\int_{Q_j}|f|\,v & \text{ if }  x\in Q_j, j\in J_1, \\
     0  & \text{ otherwise }
\end{array}\right.\]
and $h_2(x)=|f(x)|\mathcal{X}_{\Omega_2}$. If $\Omega=\Omega_1\cup \Omega_2$, $\tilde Q_j=Q(x_j,3\sqrt{n}\,r_j)$ and $\tilde \Omega=\bigcup_j \tilde Q_j$,
 we can proceed as follows
\begin{align*}
    uv\left(\left\{x\in\mathbb{R}^n: \frac{|T_b(fv)(x)|}{v(x)}>\lambda\right\}\right)&\leq  uv\left(\left\{x: \frac{|T_b(gv)(x)|}{v(x)}>\frac{\lambda}{3}\right\}\right)+uv(\tilde\Omega)\\
    &\quad + uv\left(\left\{x\in\mathbb{R}^n\setminus \tilde\Omega: \frac{|T_b(h_1v)(x)|}{v(x)}>\frac{\lambda}{3}\right\}\right)\\
    &\quad + uv\left(\left\{x\in\mathbb{R}^n\setminus \tilde\Omega: \frac{|T_b(h_2v)(x)|}{v(x)}>\frac{\lambda}{3}\right\}\right)\\
    &=A+B+C+D.
\end{align*}
We shall estimate every term above separately. In order to estimate $A$ observe that $v\in A_\infty^\rho(u)$ implies that $v\in A_{q'}^\rho(u)$, for some $q'>1$. Therefore, by Remark~\ref{obs: v en Ap implica v^{1-p'} en Ap'} and item~\eqref{item: lema: propiedades de Ap,rho - item a} in Lemma $\ref{lema: propiedades de Ap,rho}$, we have that $uv^{1-q}\in A_q^\rho$. By using Tchebycheff inequality and Theorem~\ref{teo:tipo fuerte del conmutador} we get
\[ A\leq \frac{3^q}{\lambda^q}\int_{\mathbb{R}^n}|T_b(gv)|^quv^{1-q}
    \leq \frac{C}{\lambda^q}\int_{\mathbb{R}^n}g^quv\leq \frac{C}{\lambda}\int_{\mathbb{R}^n}guv,\]
    because of the fact that $g(x)\leq C\lambda$ since the cubes $Q_j$ for $j\in J_1$ are subcritical and $|f(x)|\le \lambda$ for $x\notin \Omega$.
    
    By means of Lemma~\ref{lema: lema fundamental}, there exist $C>0$ and $\theta_1\geq 0$ such that 
    \[\frac{uv(Q)}{v(Q)}\leq C\left(1+\frac{r}{\rho(x)}\right)^{\theta_1}\inf_Q u\]
    for every cube $Q=Q(x,r)$. Then we can proceed as follows
    \refstepcounter{BPR}\label{pag: estimacion de A}
\begin{align*}
    \int_{\mathbb{R}^n}guv&\leq \sum_{j\in J_1}\frac{uv(Q_j)}{v(Q_j)}\int_{Q_j}|f|\,v+\int_{\mathbb{R}^n\setminus \Omega_1}|f|\,uv\\
    &\leq C\sum_{j\in J_1}\left(1+\frac{r_j}{\rho(x_j)}\right)^{\theta_1}\,\inf_{Q_j}u\int_{Q_j}|f|\,v+\int_{\mathbb{R}^n\setminus \Omega_1}|f|\,uv\\
    &\leq C\int_{\mathbb{R}^n}|f|\,uv,
\end{align*}
since the cubes $Q_j$ for $j\in J_1$ are subcritical. Thus we get the desired estimate for $A$. 

Let us now estimate $B$. Since $uv$ is a doubling weight in the sense of \eqref{eq: propiedad de mu}, there exist $C$ and $\theta_2\geq 0$ such that the inequality
\[uv(2Q)\leq C\,uv(Q)\left(1+\frac{r}{\rho(x)}\right)^{\theta_2}\]
holds for every cube $Q=Q(x,r)$. Combining Lemma~\ref{lema: lema fundamental} with \eqref{eq: teo: mixta para SCZO inf,delta - promedios de CZ como lambda} we proceed as follows
\refstepcounter{BPR}\label{pag: estimacion de B}
\begin{align*}
    B&\le\sum_{j} uv(\tilde Q_j)\\
    &\lesssim \sum_{j}\left(1+\frac{r_j}{\rho(x_j)}\right)^{\theta_2-\theta} \frac{uv( Q_j)}{v(Q_j)}\frac{1}{\lambda}\int_{Q_j}|f|\,v\\
    &\lesssim\sum_{j}\left(1+\frac{r_j}{\rho(x_j)}\right)^{\theta_1+\theta_2-\theta} \frac{\inf_Q u}{\lambda}\int_{Q_j}|f|\, v\\
    &\leq \frac{1}{\lambda}\int_{\mathbb{R}^n}|f|\, uv,
\end{align*}
provided we choose $\theta>\theta_1+\theta_2$. This yields the estimate for $B$.

In order to estimate $C$, recall that 
\[h_1(x)=\sum_{j\in J_1}\left(|f(x)|-\frac{1}{v(Q_j)}\int_{Q_j}|f|v\right)\mathcal{X}_{Q_j}(x)=\sum_{j\in J_1} h_1^j(x),\]
so we can write
\[\frac{T_b(h_1v)}{v}=\sum_{j\in J_1}\frac{T_b(h_1^jv)}{v}=\sum_{j\in J_1}\frac{(b-b_{Q_j})T(h_1^jv)}{v}-\sum_{j\in J_1}\frac{T((b-b_{Q_j})h_1^jv)}{v}.\]
From this estimate we get
\begin{align*}
    C&\leq uv\left(\left\{x\in\mathbb{R}^n\setminus \tilde\Omega: \left|\sum_{j\in J_1}\frac{(b-b_{Q_j})T(h_1^jv)}{v} \right|>\frac{\lambda}{6}\right\}\right)\\
    &\qquad +uv\left(\left\{x\in\mathbb{R}^n\setminus \tilde\Omega: \left|\sum_{j\in J_1}\frac{T((b-b_{Q_j})h_1^jv)}{v} \right|>\frac{\lambda}{6}\right\}\right)\\
    &=C_1+C_2.
\end{align*}
We shall deal with each term above separately. It is clear, from the definition of $h_1^j$, that
\begin{equation}\label{eq: integral 0 de h1j}
    \int_{Q_j}h_1^j(x)v(x)\,dx=0
\end{equation}
for every $j\in J_1$. By applying Tchebycheff inequality together with \eqref{eq: representacion integral de T} and \eqref{eq: integral 0 de h1j} we have
\refstepcounter{BPR}\label{pag: estimacion de C_1}
\begin{align*}
    C_1&\leq \frac{6}{\lambda}\int_{\mathbb{R}^n\setminus \tilde\Omega}\sum_{j\in J_1} |b(x)-b_{Q_j}||T(h_1^jv)(x)|u(x)\,dx\\
    &\leq \frac{C}{\lambda}\int_{\mathbb{R}^n\setminus \tilde\Omega}\sum_{j\in J_1} |b(x)-b_{Q_j}|\left|\int_{Q_j}h_1^j(y)v(y)[K(x,y)-K(x,x_j)]\,dy\right|u(x)\,dx\\
    &\leq \frac{C}{\lambda}\int_{\mathbb{R}^n\setminus \tilde\Omega}\sum_{j\in J_1} |b(x)-b_{Q_j}|\left(\int_{Q_j}|h_1^j(y)[K(x,y)-K(x,x_j)]|v(y)\,dy\right)u(x)\,dx\\
    &\leq \frac{C}{\lambda}\sum_{j\in J_1}\int_{Q_j}|h_1^j(y)|v(y)\left(\int_{\mathbb{R}^n\setminus \tilde Q_j}|b(x)-b_{Q_j}||K(x,y)-K(x,x_j)|u(x)\,dx\right)\,dy\\
    &= \frac{C}{\lambda}\sum_{j\in J_1}\int_{Q_j}|h_1^j(y)|\,v(y)F_j(y)\,dy.
\end{align*}
Observe that $|x-y|>2\,|y-x_j|$ when $x\not\in \tilde Q_j$ and $y\in Q_j$, so if we pick $0<\tilde\delta<\delta$ and $N$ sufficiently large (to be chosen later) by Proposition~\ref{propo: suavidad mas fuerte} we have
\[|K(x,y)-K(x,x_j)|\leq C\left(1+\frac{|x-y|}{\rho(x)}\right)^{-N}\frac{|y-x_j|^{\tilde\delta}}{|x-y|^{n+\tilde\delta}}.\]
%
Since $2Q_j\subseteq \tilde Q_j$, for every $j\in J_1$ we get
\[ F_j(y)\leq C\sum_{k=1}^\infty \int_{2^{k+1}Q_j\setminus 2^kQ_j}|b(x)-b_{Q_j}|\left(1+\frac{|x-y|}{\rho(x)}\right)^{-N}\frac{|y-x_j|^{\tilde\delta}}{|x-y|^{n+\tilde\delta}}\, u(x)\,dx.\]
\begin{afirmacion}\label{af: estimacion de cota del nucleo}
    If $x\in 2^{k+1}Q_j\setminus 2^k Q_j$ and $y\in Q_j$, there exists a positive constant $C$ such that
    \[\left(1+\frac{|x-y|}{\rho(x)}\right)^{-N}\leq C\left(1+\frac{2^{k+1}r_j}{\rho(x_j)}\right)^{-N/(N_0+1)}\]
    where $N_0$ is the constant appearing in \eqref{eq: constantesRho}.
\end{afirmacion}
Applying this claim we arrive to
\begin{align*}
    F_j(y)&\leq C\sum_{k=1}^\infty \int_{2^{k+1}Q_j\setminus 2^kQ_j}|b(x)-b_{Q_j}|\left(1+\frac{|x-y|}{\rho(x)}\right)^{-N}\frac{|y-x_j|^{\tilde\delta}}{|x-y|^{n+\tilde\delta}}\, u(x)\,dx\\
    &\leq C\sum_{k=1}^\infty \frac{\ell(Q_j)^{\tilde\delta}}{\ell(2^kQ_j)^{n+\tilde\delta}}\left(1+\frac{2^{k+1}r_j}{\rho(x_j)}\right)^{-N/(N_0+1)}\int_{2^{k+1}Q_j\setminus 2^kQ_j}|b(x)-b_{Q_j}|\,u(x)\,dx\\
    &\leq C\sum_{k=1}^\infty\left(1+\frac{2^{k+1}r_j}{\rho(x_j)}\right)^{-N/(N_0+1)}\frac{2^{-k\tilde\delta}}{|2^{k+1}Q_j|}\int_{2^{k+1}Q_j}|b(x)-b_{Q_j}|\, u(x)\,dx. 
\end{align*}
In order to deal with the integral above we split the oscillation factor as follows
\begin{align*}
    \frac{1}{|2^{k+1}Q_j|}\int_{2^{k+1}Q_j}|b(x)-b_{Q_j}|\,u(x)\,dx&\leq \frac{1}{|2^{k+1}Q_j|}\int_{2^{k+1}Q_j}|b(x)-b_{2^{k+1}Q_j}|\,u(x)\,dx\\
    &\qquad+\frac{1}{|2^{k+1}Q_j|}\int_{2^{k+1}Q_j}|b_{2^{k+1}Q_j}-b_{Q_j}|\,u(x)\,dx\\
    &=I+II.
\end{align*}
Since $u\in A_1^\rho$, there exists $s>1$ and $\theta_3,\theta_4\geq 0$ such that $u\in \text{RH}_s^{\rho,\theta_3}\cap A_1^{\rho,\theta_4}$.  By applying Hölder inequality together with Proposition~\ref{propo: BM0 equivalente a BMOp} we obtain
\begin{align*}
    I&\leq\left(\frac{1} {|2^{k+1}Q_j|}\int_{2^{k+1}Q_j}|b(x)-b_{2^{k+1}Q_j}|^{s'}\right)^{1/s'}\left(\frac{1}{|2^{k+1}Q_j|}\int_{2^{k+1}Q_j}u^s\right)^{1/s}\\
    &\leq [u]_{\text{RH}_s^{\rho,\theta_3}}\left(1+\frac{2^{k+1}r_j}{\rho(x_j)}\right)^{(N_0+1)\gamma+\theta_3}\left(\frac{1}{|2^{k+1}Q_j|}\int_{2^{k+1}Q_j}u\right)\\
    &\leq [u]_{\text{RH}_s^{\rho,\theta_3}}[u]_{A_1^{\rho,\theta_4}}\left(1+\frac{2^{k+1}r_j}{\rho(x_j)}\right)^{(N_0+1)\gamma+\theta_3+\theta_4}\inf_{2^{k+1}Q_j}u.
\end{align*}
On the other hand, to estimate $II$ we use Lemma~\ref{lema: diferencia de promedios acotada por norma BMO} to get \label{pag: estimacion de I}
\begin{align*}
   II&\leq C(k+1)\left(1+\frac{2^{k+1}r_j}{\rho(x_j)}\right)^{\gamma}\left(\frac{1}{|2^{k+1}Q_j|}\int_{2^{k+1}Q_j}u\right)\\
   &\leq C[u]_{A_1^{\rho,\theta_4}}(k+1)\left(1+\frac{2^{k+1}r_j}{\rho(x_j)}\right)^{\gamma+\theta_4}\inf_{2^{k+1}Q_j}u.
\end{align*}\label{pag: estimacion de II}
Plugging the estimates for $I$ and $II$ in the inequality for $F_j$ we get
\begin{align*}
    F_j(y)&\leq Cu(y)\sum_{k=1}^\infty (k+1)2^{-k\tilde\delta}\left(1+\frac{2^{k+1}r_j}{\rho(x_j)}\right)^{(N_0+1)\gamma+\theta_3+\theta_4-N/(N_0+1)}\\
    &\leq Cu(y),
\end{align*}
for each $y\in Q_j$, provided we choose $N$ sufficiently large such that $(N_0+1)\gamma+\theta_3+\theta_4-N/(N_0+1)<0$. Consequently, using the definition of $h_1$ together with Lemma~\ref{lema: lema fundamental}\refstepcounter{BPR}\label{pag: estimacion de C_1 (2)}
\begin{align*}
  C_1&\leq \frac{C}{\lambda}\sum_{j\in J_1}\int_{Q_j}|h_1^j(y)|\,u(y)v(y)\,dy\\
  &\leq \frac{C}{\lambda}\sum_{j\in J_1}\int_{Q_j}|f(y)|\,u(y)v(y)\,dy+\frac{C}{\lambda}\sum_{j\in J_1}\frac{uv(Q_j)}{v(Q_j)}\int_{Q_j}|f(y)|\,v(y)\,dy\\
  &\leq \frac{C}{\lambda}\int_{\Omega_1}|f(y)|\,u(y)v(y)\,dy+\frac{C}{\lambda}\sum_{j\in J_1}\left(1+\frac{r_j}{\rho(x_j)}\right)^{\theta_1}\inf_{Q_j}u\int_{Q_j}|f(y)|\,v(y)\,dy\\
  &\leq \frac{C}{\lambda}\int_{\mathbb{R}^n}|f(y)\,u(y)v(y)\,dy.
\end{align*}

In order to deal with $C_2$ notice that we can write
\[C_2=uv\left(\left\{x\in\mathbb{R}^n\setminus \tilde\Omega: \frac{\left|T\left(\sum_{j\in J_1}((b-b_{Q_j})h_1^jv\right)\right|}{v} >\frac{\lambda}{6}\right\}\right)\]
since the functions $h_1^j$ are supported on the cubes $Q_j$, which are disjoint. Then we apply Theorem~\ref{teo: mixta para T SCZO infinito, delta} to obtain
\refstepcounter{BPR}\label{pag: estimacion de C_2}
\begin{align*}
    C_2&\leq \frac{C}{\lambda}\int_{\mathbb{R}^n}\left|\sum_{j\in J_1}(b(x)-b_{Q_j})h_1^j(x)\right|\,u(x)v(x)\,dx\\
    &\leq \frac{C}{\lambda}\sum_{j\in J_1}\int_{Q_j}|(b(x)-b_{Q_j})h_1^j(x)|\,u(x)v(x)\,dx\\
    &\leq \frac{C}{\lambda}\sum_{j\in J_1}\int_{Q_j}|(b(x)-b_{Q_j})f(x)|\,u(x)v(x)\,dx+\frac{C}{\lambda}\sum_{j\in J_1}\left(\frac{1}{v(Q_j)}\int_{Q_j}|f|\,v\right)\left(\int_{Q_j}|b-b_{Q_j}|\,uv\right)\\
    &=C_2^1+C_2^2.
\end{align*}

For $C_2^1$ we perform a generalized H\"{o}lder inequality with respect to the measure $d\mu(x)=u(x)v(x)\,dx$ and the functions $\Phi$ and $\tilde\Phi(t)\approx e^{t}-1$ . Then, by Lemma~\ref{lema: control de norma exp por BMO}  and \eqref{eq: norma equivalente} we obtain that
\begin{align*}
    \int_{Q_j}|(b-b_{Q_j})f|uv&\leq uv(Q_j)\,\|b-b_{Q_j}\|_{\tilde\Phi,Q_j,uv}\, \|f\|_{\Phi,Q_j,uv}\\
    &\leq C\,uv(Q_j)\left(1+\frac{r_j}{\rho(x_j)}\right)^{\theta_6 }\inf_{\tau>0}\left\{\tau+\frac{\tau}{uv(Q_j)}\int_{Q_j}\Phi\left(\frac{f}{\tau}\right)uv\right\}\\
    &\leq C\,uv(Q_j)\left(\lambda+\frac{\lambda}{uv(Q_j)}\int_{Q_j}\Phi\left(\frac{f}{\lambda}\right)uv\right)\\
    &\leq C\, uv(Q_j)\lambda +C\lambda\int_{Q_j}\Phi\left(\frac{f}{\lambda}\right)uv.
\end{align*}
for some $\theta_6$ provided by Lemma~\ref{lema: control de norma exp por BMO}. Therefore, by using \eqref{eq: teo: mixta para SCZO inf,delta - promedios de CZ como lambda} and Lemma~\ref{lema: lema fundamental} we get
\begin{align*}
    C_2^1&\leq C\left(\sum_{j\in J_1} uv(Q_j)+\sum_{j\in J_1}\int_{Q_j}\Phi\left(\frac{|f|}{\lambda}\right)uv\right)\\
    &\leq C\sum_{j\in J_1} \frac{uv(Q_j)}{v(Q_j)}\left(1+\frac{r_j}{\rho(x_j)}\right)^{-\theta}\int_{Q_j}\frac{|f|}{\lambda}v+C\int_{\Omega_1}\Phi\left(\frac{|f|}{\lambda}\right)uv\\
    &\leq C\sum_{j\in J_1} \left(1+\frac{r_j}{\rho(x_j)}\right)^{-\theta+\theta_1}\inf_{Q_j}u\int_{Q_j}\frac{|f|}{\lambda}v+C\int_{\Omega_1}\Phi\left(\frac{|f|}{\lambda}\right)uv\\
    &\leq C\int_{\mathbb{R}^n}\Phi\left(\frac{|f|}{\lambda}\right)uv,
\end{align*}
where we have used the fact that the cubes $Q_j$ for $j\in J_1$ are subcritical.

In order to estimate $D$ let us first remember that $h_2(x)=|f(x)|\mathcal{X}_{\Omega_2}$ where $\Omega_2=\bigcup_{j\in J_2} Q_j$.

Then
    $$h_2(x)=\sum_{j\in J_2}|f(x)|\mathcal{X}_{Q_j}=\sum_{j\in J_2}h_2^j(x)$$


 Applying Tchebycheff inequality we have that
\begin{align*}
    D&\leq \frac{3}{\lambda} \int_{\mathbb{R}^n\setminus \tilde \Omega}|T_b(h_2v)(x)|u(x)\,dx\\
    &\leq \frac{3}{\lambda}  \int_{\mathbb{R}^n\setminus \tilde \Omega}\sum_{j\in J_2}\left(\int_{\mathbb{R}^n}|(b(x)-b(y))K(x,y)h_2^j(y)v(y)|\,dy\right) u(x)\,dx\\
    &\leq \frac{3}{\lambda}\sum_{j \in J_2}  \int_{\mathbb{R}^n\setminus \tilde Q_j}\left( \int_{Q_j}|(b(x)-b(y))K(x,y)f(y)v(y)|\,dy\right) u(x)\,dx\\
    &\lesssim \frac{3}{\lambda} \sum_{j\in J_2}\int_{\mathbb{R}^n\setminus \tilde Q_j}|b(x)-b_{Q_j}|\left(\int_{Q_j}|K(x,y)f(y)|v(y)\,dy\right) u(x)\,dx\\
    &\quad +\frac{3}{\lambda}\sum_{j\in J_2} \int_{\mathbb{R}^n\setminus \tilde Q_j}\left(\int_{Q_j}|(b(y)-b_{Q_j})K(x,y)f(y)|v(y)\,dy\right) u(x)\,dx\\
    &=\frac{3}{\lambda}\left(D_1+D_2\right).
\end{align*}
By Tonelli's theorem we can write
\[D_1=\sum_{j \in J_2}  \int_{Q_j}|f(y)|\,v(y)\left(\int_{\mathbb{R}^n\setminus \tilde Q_j}|(b(x)-b_{Q_j})K(x,y)|\,u(x)\,dx\right) \,dy.\]

In order to estimate the inner integral we use \eqref{eq: condicion de tamaño (inf,delta)} with $N>0$ to be chosen, so
\begin{align*}
  \int_{\mathbb{R}^n\setminus \tilde Q_j}|(b(x)-b_{Q_j})K(x,y)|\,u(x)\,dx&\leq \sum_{k=1}^\infty  \int_{2^{k+1}Q_j\setminus 2^k Q_j}\frac{|b(x)-b_{Q_j}|}{|x-y|^n}\left(1+\frac{|x-y|}{\rho(x)}\right)^{-N}u(x)\,dx\\
  &\leq \sum_{k=1}^\infty  \int_{2^{k+1}Q_j\setminus 2^k Q_j}\frac{|b(x)-b_{2^{k+1}Q_j}|}{|x-y|^n}\left(1+\frac{|x-y|}{\rho(x)}\right)^{-N}u(x)\,dx\\
  &\quad +\sum_{k=1}^\infty  \int_{2^{k+1}Q_j\setminus 2^k Q_j}\frac{|b_{2^{k+1}Q_j}-b_{Q_j}|}{|x-y|^n}\left(1+\frac{|x-y|}{\rho(x)}\right)^{-N}u(x)\,dx\\
  &=D_1^1+D_1^2.
\end{align*}

For the estimate of $D_1^1$, we apply Claim~\ref{af: estimacion de cota del nucleo} to obtain
\[D_1^1\leq C\sum_{k=1}^\infty\left(1+\frac{2^{k+1}r_j}{\rho(x_j)}\right)^{-N/(N_0+1)}  \frac{1}{|2^{k+1}Q_j|}\int_{2^{k+1}Q_j\setminus 2^k Q_j}|b(x)-b_{2^{k+1}Q_j}|\,u(x)\,dx.\]
Proceeding as in the estimate of $I$ in page~\pageref{pag: estimacion de I}, we arrive to
\begin{align*}
D_1^1&\leq C\sum_{k=1}^\infty\left(1+\frac{2^{k+1}r_j}{\rho(x_j)}\right)^{(N_0+1)\gamma+\theta_3+\theta_4-N/(N_0+1)}  \inf_{2^{k+1}Q_j}u\\
&\leq C\inf_{Q_j} u\sum_{k=1}^\infty 2^{-k(N/(N_0+1)-\theta_3-\theta_4-(N_0+1)\gamma)}\\
&\leq C\inf_{Q_j}u,
\end{align*}
since $j\in J_2$ and $N$ is such that $N/(N_0+1)-\theta_3-\theta_4-(N_0+1)\gamma>0$.

For $D_1^2$ we use Lemma~\ref{lema: diferencia de promedios acotada por norma BMO} and Claim~\ref{af: estimacion de cota del nucleo} to obtain 
\begin{align*}
  D_1^2&\leq C\sum_{k=1}^\infty(k+1)\left(1+\frac{2^{k+1}r_j}{\rho(x_j)}\right)^{-N/(N_0+1)+\gamma}  \frac{1}{|2^{k+1}Q_j|}\int_{2^{k+1}Q_j}u\\
  &\leq C\sum_{k=1}^\infty k\left(1+\frac{2^{k+1}r_j}{\rho(x_j)}\right)^{-N/(N_0+1)+\gamma+\theta_4}  \inf_{2^{k+1}Q_j}u\\
  &\leq C\inf_{Q_j}u \sum_{k=1}^\infty k2^{-k({N/(N_0+1)-\gamma-\theta_4})}\\
  &\leq C\inf_{Q_j}u,
\end{align*}
since $j\in J_2$ and $N/(N_0+1)-\gamma-\theta_4>0$.

Therefore,
\[D_1\leq C\sum_{j\in J_2}\int_{Q_j}|f|uv\leq C\int_{\mathbb{R}^n}|f|\,uv.\]

For $D_2$ observe that
\begin{align*}
  D_2&=\sum_{j\in J_2} \int_{\mathbb{R}^n\setminus \tilde Q_j}\left(\int_{Q_j}|(b(y)-b_{Q_j})K(x,y)f(y)|\,v(y)\,dy\right) u(x)\,dx\\
  &=\sum_{j\in J_2}\int_{Q_j}|b(y)-b_{Q_j}||f(y)|\left(\int_{\mathbb{R}^n\setminus \tilde Q_j}|K(x,y)|\,u(x)\,dx\right)v(y)\,dy.
\end{align*}

By using \eqref{eq: condicion de tamaño (inf,delta)} and Claim~\ref{af: estimacion de cota del nucleo} we have
\begin{align*}
  \int_{\mathbb{R}^n\setminus \tilde Q_j}|K(x,y)|\,u(x)\,dx&\lesssim \sum_{k=1}^\infty  \int_{2^{k+1}Q_j\setminus 2^k Q_j}|x-y|^{-n}\left(1+\frac{|x-y|}{\rho(x)}\right)^{-N}u(x)\,dx\\
  &\lesssim \sum_{k=1}^\infty  \int_{2^{k+1}Q_j\setminus 2^k Q_j}|x-y|^{-n}\left(1+\frac{2^{k+1}r_j}{\rho(x_j)}\right)^{-N/(N_0+1)}u(x)\,dx\\
  &\lesssim \sum_{k=1}^\infty\left(1+\frac{2^{k+1}r_j}{\rho(x_j)}\right)^{-N/(N_0+1)}\frac{1}{|2^{k+1}Q_j|}  \int_{2^{k+1}Q_j}u(x)\,dx\\
  &\lesssim  \left(1+\frac{r_j}{\rho(x_j)}\right)^{-\tfrac{N}{2(N_0+1)}}\sum_{k=1}^\infty\left(1+\frac{2^{k+1}r_j}{\rho(x_j)}\right)^{-\tfrac{N}{2(N_0+1)}+\theta_4}\inf_{2^{k+1}Q_j}u\\
  &\lesssim \left(1+\frac{r_j}{\rho(x_j)}\right)^{-\tfrac{N}{2(N_0+1)}}\sum_{k=1}^\infty 2^{-k\left(\tfrac{N}{2(N_0+1)}-\theta_4\right)}\, \inf_{Q_j}u\\
  &\lesssim \left(1+\frac{r_j}{\rho(x_j)}\right)^{-\tfrac{N}{2(N_0+1)}} \inf_{Q_j} u,
\end{align*}
since $j\in J_2$ and $N$ is chosen such that $N>2\theta_4(N_0+1)$. Therefore,
\[D_2\lesssim \sum_{j\in J_2}\left(1+\frac{r_j}{\rho(x_j)}\right)^{-\tfrac{N}{2(N_0+1)}}\int_{Q_j}|b(y)-b_{Q_j}||f(y)|\,u(y)v(y)\,dy.\]

We now apply a generalized Hölder inequality with the Young functions $\Phi$ and $\tilde \Phi$, with respect to the measure $d\mu(x)=u(x)v(x)\,dx$  to get
\begin{equation}\label{eq: teo: mixta para SCZO inf,delta - Holder generalizada en D2}
  \int_{Q_j}|b(y)-b_{Q_j}||f(y)|\,u(y)v(y)\,dy\leq uv(Q_j) \|b-b_{Q_j}\|_{\tilde \Phi, Q_j, uv}\|f\|_{\Phi,Q_j, uv}.  
\end{equation}

\refstepcounter{BPR}\label{pag: estimacion para D2}
Since $uv\in A_\infty^\rho$, there exist $r>1$ and $\theta_5\geq 0$ such that $uv\in\textrm{RH}_r^{\rho,\theta_5}$. By Lemma~\ref{lema: con pesos menor que sin pesos} we obtain that
\[\|b-b_{Q_j}\|_{\tilde \Phi, Q_j, uv}\leq C\left(1+\frac{r_j}{\rho(x_j)}\right)^{\theta_5}\|b-b_{Q_j}\|_{\tilde\Phi, Q_j}.\]
On the other hand, using \eqref{eq: norma equivalente} with $w=uv$, the integral in \eqref{eq: teo: mixta para SCZO inf,delta - Holder generalizada en D2} can be bounded by
\begin{align*}
    \int_{Q_j}|b(y)-b_{Q_j}||f(y)|\,u(y)v(y)\,dy&\leq C\lambda \,uv(Q_j)\left(1+\frac{r_j}{\rho(x_j)}\right)^{\theta_5}\|b-b_{Q_j}\|_{\tilde\Phi, Q_j}\\
    &\quad +C\lambda \left(1+\frac{r_j}{\rho(x_j)}\right)^{\theta_5}\|b-b_{Q_j}\|_{\tilde\Phi, Q_j}\int_{Q_j}\Phi\left(\frac{|f|}{\lambda}\right)uv\\
    &=D_2^1+D_2^2.
\end{align*}

We bound these terms separately. Let us first  estimate  $D_2^1$, by Lemma~\ref{lema: control de norma exp por BMO} there exist positive constants $C$ and $\theta_6$  such that
\begin{align*}
D_2^1&\leq C \lambda\left(1+\frac{r_j}{\rho(x_j)}\right)^{\theta_5+\theta_6}\frac{uv(Q_j)}{v(Q_j)}v(Q_j)\\
&\leq C \lambda\left(1+\frac{r_j}{\rho(x_j)}\right)^{\theta_1+\theta_5+\theta_6}\left(\inf_{Q_j} u\right)v(Q_j)\\
&\leq C \lambda\left(1+\frac{r_j}{\rho(x_j)}\right)^{\theta_1+\theta_5+\theta_6-\theta} \int_{Q_j}\frac{|f|}{\lambda}uv,
\end{align*}
where we have also used Lemma~\ref{lema: lema fundamental} and \eqref{eq: teo: mixta para SCZO inf,delta - promedios de CZ como lambda}. Provided we choose $\theta>\theta_1+\theta_5+\theta_6$ we obtain
\[D_2^1\leq C\lambda\int_{Q_j}\frac{|f|}{\lambda}uv.\]
In order to estimate $D_2^2$ we apply again Lemma~\ref{lema: control de norma exp por BMO} to obtain
\[D_2^2\leq C \lambda \left(1+\frac{r_j}{\rho(x_j)}\right)^{\theta_5+\theta_6}\int_{Q_j}\Phi\left(\frac{|f|}{\lambda}\right)uv.\]
From the estimations above we obtain that
\[
D_2\lesssim \sum_{j\in J_2}\lambda \left(1+\frac{r_j}{\rho(x_j)}\right)^{\theta_5+\theta_6-\tfrac{N}{2(N_0+1)}}\int_{Q_j}\Phi\left(\frac{|f|}{\lambda}\right)uv.
\]
By choosing $N$ large enough we obtain the desired result.  
\end{proof}

\bigskip

\begin{proof}[Proof of Claim~\ref{af: estimacion de cota del nucleo}]
    Since $x\in 2^{k+1}Q_j\setminus 2^kQ_j$ and $y\in Q_j$ we have that
    \[|x-y|\geq \frac{\ell(2^{k}Q_j)-\ell(Q_j)}{2}=\frac{(2^k-1)\ell(Q_j)}{2}\geq \frac{2^{k+1}r_j}{4\sqrt{n}}.\]
    On the other hand, by \eqref{eq: constantesRho} we get
    \begin{align*}
        \frac{1}{\rho(x)}&\geq \frac{1}{C_0\rho(x_j)}\left(1+\frac{|x-x_j|}{\rho(x_j)}\right)^{-N_0/(N_0+1)}\\
        &\geq \frac{1}{C_0\rho(x_j)}\left(1+\frac{2^{k+1}r_j}{\rho(x_j)}\right)^{-N_0/(N_0+1)}.
    \end{align*}
    Combining these two estimates we arrive to
    \begin{align*}
        1+\frac{|x-y|}{\rho(x)}&\geq 1+\frac{1}{4C_0\sqrt{n}}\frac{2^{k+1}r_j}{\rho(x_j)}\left(1+\frac{2^{k+1}r_j}{\rho(x_j)}\right)^{-N_0/(N_0+1)}\\
        &\geq \frac{1}{4C_0\sqrt{n}}\left(1+\frac{2^{k+1}r_j}{\rho(x_j)}\right)^{-N_0/(N_0+1)} \left(1+\frac{2^{k+1}r_j}{\rho(x_j)}\right)\\
        &=\frac{1}{4C_0\sqrt{n}}\left(1+\frac{2^{k+1}r_j}{\rho(x_j)}\right)^{1/(N_0+1)}.
    \end{align*}
    Finally,
    \[\left( 1+\frac{|x-y|}{\rho(x)}\right)^{-N}\leq C\left(1+\frac{2^{k+1}r_j}{\rho(x_j)}\right)^{-N/(N_0+1)},\]
    where $C=(4C_0\sqrt{n})^{N}$.
\end{proof}

Before proceeding with the proof of Theorem~\ref{teo: mixta para SCZO s,delta} we state two theorems that will be useful for this purpose. The first one establishes a strong $(p,p)$ type inequality for $T_b$ when $T$ is an $(s,\delta)-SCZO$, whose  proof is also a consequence of Theorem 8 in \cite{BCH13} and Theorem 4.1 in \cite{VSH20}. The second one is a mixed-weak type estimate for the operator $T$ proved in \cite{BPQ24}.

\begin{teo}\label{teo: tipo fuerte del conmutador (s,delta)}
 Let $1<p<\infty$, $\delta\in (0,1]$, $1<s<\infty$, $b\in BMO(\rho)$ and $T$ be a SCZO of $(s,\delta)$ type. Then there exists a positive constant $C$ such that the inequality
 \[
 \int_{\mathbb{R}^n}\left|T_bf(x)\right|^p\, w(x)\, dx\le C\int_{\mathbb{R}^n}|f(x)|^p \, w(x)\, dx
 \]
holds  for every $w\in A_p^{\rho}$.
\end{teo}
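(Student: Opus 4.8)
The plan is to deduce this weighted strong-type bound for the commutator $T_b$ from the corresponding bound for $T$ itself, using the conjugation argument of Coifman--Rochberg--Weiss adapted to the $A_p^{\rho}$ scale. The only property of $T$ that is needed is the weighted boundedness supplied by Theorem~8 in \cite{BCH13}: for every $1<q<\infty$ and every $w\in A_q^{\rho}$ one has $\|Tf\|_{L^q(w)}\le C\|f\|_{L^q(w)}$, with $C$ depending only on $n$, $q$, $\rho$ and $[w]_{A_q^{\rho}}$. The passage from this to the commutator estimate is the content of Theorem~4.1 in \cite{VSH20}; I describe the mechanism behind it so that the statement is seen to be an immediate combination of these two results.

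Since $b\in\mathrm{BMO}(\rho)=\bigcup_{\theta\ge0}\mathrm{BMO}_\theta(\rho)$, I would fix $\theta\ge0$ with $b\in\mathrm{BMO}_\theta(\rho)$ and, after normalising, assume $\|b\|_{\mathrm{BMO}_\theta(\rho)}=1$. Next I would introduce, for $z\in\mathbb{C}$, the conjugated operator $T_zf=e^{zb}\,T(e^{-zb}f)$, so that $T_bf=\frac{d}{dz}T_zf\big|_{z=0}$ and, by Cauchy's integral formula on a circle $|z|=\varepsilon$ to be fixed, $T_bf(x)=\frac{1}{2\pi i}\int_{|z|=\varepsilon}\frac{T_zf(x)}{z^{2}}\,dz$. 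An application of Minkowski's integral inequality then gives $\|T_bf\|_{L^p(w)}\le C\varepsilon^{-1}\sup_{|z|=\varepsilon}\|T_zf\|_{L^p(w)}$, and since $\|T_zf\|_{L^p(w)}^p=\int_{\mathbb{R}^n}|T(e^{-zb}f)|^p\,e^{p\,\mathrm{Re}(z)\,b}\,w$, everything reduces to estimating $\|T(e^{-zb}f)\|_{L^p(e^{p\,\mathrm{Re}(z)\,b}w)}$ uniformly for $|z|=\varepsilon$.

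The key point, and the step I expect to be the main obstacle, is a perturbation lemma for the $A_p^{\rho}$ classes: if $w\in A_p^{\rho}$ and $b\in\mathrm{BMO}_\theta(\rho)$ with $\|b\|_{\mathrm{BMO}_\theta(\rho)}=1$, then there is $\varepsilon=\varepsilon(n,p,\theta,\rho,[w]_{A_p^{\rho}})>0$ such that $e^{\alpha b}w\in A_p^{\rho}$ for all $|\alpha|\le p\varepsilon$, with $A_p^{\rho}$-constant bounded uniformly in $\alpha$. This should follow by combining the self-improvement (openness) of the $A_p^{\rho}$ classes with the John--Nirenberg exponential integrability available for $\mathrm{BMO}_\theta(\rho)$ (as in \cite{BHS11JMAA} and \cite{BHS11}; cf.\ Proposition~\ref{propo: BM0 equivalente a BMOp}); the $\rho$-growth factors $(1+r/\rho(x))^{\theta}$ that appear are harmless since they are bounded by fixed powers and only enlarge the relevant constants. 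Granting this, Theorem~8 in \cite{BCH13} applied with the weight $e^{p\,\mathrm{Re}(z)\,b}w$ yields, for $|z|=\varepsilon$, $\|T(e^{-zb}f)\|_{L^p(e^{p\,\mathrm{Re}(z)\,b}w)}\le C\|e^{-zb}f\|_{L^p(e^{p\,\mathrm{Re}(z)\,b}w)}=C\|f\|_{L^p(w)}$ with $C$ independent of $z$; feeding this back through the Cauchy estimate and fixing $\varepsilon$ once and for all gives $\|T_bf\|_{L^p(w)}\le C\varepsilon^{-1}\|f\|_{L^p(w)}$, which is the claim. Since this transference is precisely what is established in \cite{VSH20}, in the paper it suffices to invoke Theorem~4.1 there together with Theorem~8 of \cite{BCH13}.
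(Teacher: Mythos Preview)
Your proposal is correct and aligns with the paper's own treatment: the paper does not supply an independent proof of this theorem but simply asserts that it is a consequence of Theorem~8 in \cite{BCH13} (weighted $L^p$-boundedness of the $(s,\delta)$-SCZO) together with Theorem~4.1 in \cite{VSH20} (the transference to the commutator), exactly the two inputs you invoke. Your additional sketch of the Coifman--Rochberg--Weiss conjugation mechanism behind \cite{VSH20} is extra detail, but the strategy and the references are identical.
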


\begin{teo}\label{teo: mixta para T SCZO s, delta}
Let $\rho$ be a critical radius function, $1<s<\infty$ and $0<\delta\leq 1$. Let $T$ be
 a SCZO of $(s,\delta)$ type. If $u$ is a weight verifying $u^{s'}\in A_1^{\rho}$ and $v\in A_\infty^{\rho}(u^\beta)$ for some $\beta>s'$, then the inequality
	\[uv\left(\left\{x\in\mathbb{R}^n: \frac{|T(fv)(x)|}{v(x)}>\lambda\right\}\right)\leq \frac{C}{\lambda}\int_{\mathbb{R}^n}|f(x)|\,u(x)v(x)dx\]
	holds for every positive $\lambda$. 
\end{teo}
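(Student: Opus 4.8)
The plan is to run a Calder\'on--Zygmund decomposition adapted to $\rho$, along the lines of the commutator argument below but lighter; the genuinely new point is that the kernel bounds \eqref{TamHorm} and \eqref{suav-horm} are only $L^s$-averaged, so one must insert a H\"older step with exponent $s$, which is why the hypothesis on $u$ is $u^{s'}\in A_1^\rho$ rather than $u\in A_1^\rho$. Fix $\lambda>0$; since the inequality is invariant under $(f,\lambda)\mapsto(cf,c\lambda)$ we may take $\lambda=1$, and splitting $f$ into real/imaginary and positive/negative parts we may assume $f\ge 0$. From $u^{s'}\in A_1^\rho$ we get $u\in A_1^\rho$; and from the hypotheses, through the structural properties of the $A_p^\rho(u)$ classes (\cite{BHS11JMAA}, Lemma~\ref{lema: propiedades de Ap,rho}) and the weight machinery of \cite{BPQ24}, we obtain the three facts that the choice $u^{s'}\in A_1^\rho$, $v\in A_\infty^\rho(u^\beta)$ with $\beta>s'$ is designed to yield: (i) $v$ and $uv$ satisfy the doubling condition \eqref{eq: propiedad de mu}; (ii) $uv^{1-q}\in A_q^\rho$ for some $q>1$; and (iii) the analogue of Lemma~\ref{lema: lema fundamental}, that is, $uv(Q)/v(Q)\lesssim(1+r/\rho(x))^{\theta_0}\inf_Q u$ for every cube $Q=Q(x,r)$.

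Apply Lemma~\ref{lema: CZ decomposition} to $f$ at level $1$ with respect to $d\mu=v\,dx$, with $\theta$ large, obtaining pairwise disjoint cubes $\{Q_j\}$, $Q_j=Q(x_j,r_j)$, satisfying the usual two-sided bounds for $v(Q_j)^{-1}\int_{Q_j}f\,v$ and with $f\le 1$ a.e.\ off $\Omega=\bigcup_jQ_j$. Put $J_1=\{j:r_j\le\rho(x_j)\}$, $J_2=\mathbb N\setminus J_1$, and write $f=g+h_1+h_2$ with $g$ the good part (the $v$-average of $f$ on $Q_j$ for $j\in J_1$, zero on $Q_j$ for $j\in J_2$, and $f$ off $\Omega$, so $0\le g\lesssim 1$), $h_1=\sum_{j\in J_1}h_1^j$ with $h_1^j=(f-v(Q_j)^{-1}\int_{Q_j}f\,v)\mathcal{X}_{Q_j}$ (so $\int_{Q_j}h_1^jv=0$), and $h_2=\sum_{j\in J_2}f\,\mathcal{X}_{Q_j}$. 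With $\tilde Q_j=Q(x_j,3\sqrt n\,r_j)$ and $\tilde\Omega=\bigcup_j\tilde Q_j$, split $uv(\{|T(fv)|/v>1\})\le A+B+C+D$ according to $g$, $uv(\tilde\Omega)$, $h_1$ off $\tilde\Omega$, and $h_2$ off $\tilde\Omega$. For $A$, Tchebycheff's inequality and the $L^q(uv^{1-q})$-boundedness of $T$ (from \cite{BCH13}, by (ii)) give $A\lesssim\int g^q\,uv\lesssim\int g\,uv\lesssim\int f\,uv$, using $g\lesssim1$, (iii), and the subcriticality of the cubes in $J_1$. For $B$, the doubling of $uv$, the lower Calder\'on--Zygmund bound and (iii) give $B\lesssim\sum_j(1+r_j/\rho(x_j))^{\theta_0+\theta_2-\theta}\inf_{Q_j}u\int_{Q_j}f\,v\le\int f\,uv$ once $\theta>\theta_0+\theta_2$.

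The terms $C$ and $D$ are where \eqref{TamHorm} and \eqref{suav-horm} enter. For $C$, the cancellation $\int_{Q_j}h_1^jv=0$ yields $T(h_1^jv)(x)=\int_{Q_j}(K(x,y)-K(x,x_j))h_1^j(y)v(y)\,dy$ for $x\notin\tilde Q_j$, so by Tchebycheff and Tonelli $C\lesssim\sum_{j\in J_1}\int_{Q_j}|h_1^j(y)|v(y)F_j(y)\,dy$ with $F_j(y)=\int_{\mathbb R^n\setminus\tilde Q_j}|K(x,y)-K(x,x_j)|u(x)\,dx$; splitting $F_j$ over the annuli $2^{k+1}Q_j\setminus 2^kQ_j$ and applying H\"older with exponents $s,s'$, the kernel factor is estimated by Proposition~\ref{propo: suavidad mas fuerte (s,delta)} (valid since $|y-x_j|<r_j\le\rho(x_j)$ for $j\in J_1$, and already carrying decay in $\rho(x_j)$) and the $u$-factor by $u^{s'}\in A_1^{\rho,\theta_4}$, so that for $N$ large $F_j(y)\lesssim\inf_{Q_j}u$ uniformly on $Q_j$ and hence $C\lesssim\sum_{j\in J_1}\inf_{Q_j}u\int_{Q_j}f\,v\le\int f\,uv$. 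For $D$ there is no cancellation; one uses $|T(f\mathcal{X}_{Q_j}v)(x)|\le\int_{Q_j}|K(x,y)|f(y)v(y)\,dy$ for $x\notin\tilde Q_j$ and the same annular H\"older argument with \eqref{TamHorm} replacing the smoothness, obtaining $\int_{\mathbb R^n\setminus\tilde Q_j}|K(x,y)|u(x)\,dx\lesssim\inf_{Q_j}u$ for $j\in J_2$ (for $N$ large; the supercriticality $r_j>\rho(x_j)$ makes $1+2^{k+1}r_j/\rho(x_j)\ge 2^{k+1}$, furnishing geometric decay in $k$), so $D\lesssim\sum_{j\in J_2}\inf_{Q_j}u\int_{Q_j}f\,v\le\int f\,uv$. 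Restoring $\lambda$ finishes the proof.

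The main obstacle lies in facts (ii) and (iii): one must check that $u^{s'}\in A_1^\rho$ and $v\in A_\infty^\rho(u^\beta)$ with $\beta>s'$ really produce an exponent $q>1$ with $uv^{1-q}\in A_q^\rho$ and the fundamental-lemma estimate $uv(Q)/v(Q)\lesssim(1+r/\rho(x))^{\theta_0}\inf_Q u$; this is precisely the place where the surplus integrability built into $s'$ and $\beta$ is consumed, and it leans on the $\rho$-versions of the openness and self-improvement properties of Muckenhoupt weights. The rest is essentially routine: unlike in the commutator argument below, \eqref{TamHorm} and \eqref{suav-horm} state their decay directly in terms of $\rho(x_j)$, so no comparison of $\rho(x)$ with $\rho(x_j)$ (no analogue of Claim~\ref{af: estimacion de cota del nucleo}) is needed, and the only $(1+r/\rho)$-bookkeeping left appears in $A$ and $B$ and is absorbed by choosing $\theta$ large and by the subcriticality of the cubes in $J_1$.
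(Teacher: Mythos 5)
First, note that the paper never proves this theorem: it is imported verbatim from \cite{BPQ24} and used as a black box in the proof of Theorem~\ref{teo: mixta para SCZO s,delta}, so there is no in-paper argument to compare yours against. Your reconstruction does follow the template the paper uses for its own main results (Calder\'on--Zygmund decomposition with respect to $v\,dx$ at a $\rho$-corrected level, the splitting into $A,B,C,D$, H\"older with exponents $(s,s')$ on annuli so that \eqref{TamHorm} and Proposition~\ref{propo: suavidad mas fuerte (s,delta)} handle the kernel while $u^{s'}\in A_1^\rho$ handles the weight), and the kernel-side estimates are right, including your observation that no analogue of Claim~\ref{af: estimacion de cota del nucleo} is needed because the decay in \eqref{TamHorm} and in Proposition~\ref{propo: suavidad mas fuerte (s,delta)} is already expressed at the center $x_j$.

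The genuine gap is exactly where you locate it, and you do not close it. Facts (ii) ($uv^{1-q}\in A_q^\rho$ for some $q>1$) and (iii) (the estimate $uv(Q)/v(Q)\lesssim(1+r/\rho(x))^{\theta_0}\inf_Qu$) do not follow from the statements available in this paper: Lemma~\ref{lema: propiedades de Ap,rho}~\eqref{item: lema: propiedades de Ap,rho - item b} lets one lower the power of the base weight only when that base weight is itself in $A_1^\rho$, and here the base weight is $u^\beta$ with $\beta>s'$, a power greater than one of the $A_1^\rho$ weight $u^{s'}$ and hence not known to lie in $A_1^\rho$; likewise Lemma~\ref{lema: lema fundamental} requires $v\in A_\infty^\rho(u)$, which is not among the hypotheses. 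So the passage from ``$u^{s'}\in A_1^\rho$ and $v\in A_\infty^\rho(u^\beta)$, $\beta>s'$'' to (ii) and (iii) is precisely the non-routine content of the theorem (it is what the surplus between $s'$ and $\beta$ is for, via a H\"older/reverse-H\"older interpolation between these exponents carried out in \cite{BPQ24}), and asserting that it ``leans on the $\rho$-versions of openness and self-improvement'' is not a proof. A secondary caveat: in term $A$ you invoke $L^q(uv^{1-q})$-boundedness of an $(s,\delta)$-SCZO for a weight merely in $A_q^\rho$; for $L^s$-H\"ormander-type kernels such bounds normally require a condition adapted to $s$ (classically $w\in A_{q/s'}$), so even granting the paper's Theorem~\ref{teo: tipo fuerte del conmutador (s,delta)} you should state exactly which weighted bound you are using and verify that $uv^{1-q}$ satisfies its hypotheses.
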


\begin{proof}[Proof of Theorem~\ref{teo: mixta para SCZO s,delta}] We shall proceed similarly as in the proof of Theorem~\ref{teo: mixta para SCZO inf,delta}, omitting some similar calculations and notations, focusing on the parts of the proof in which different arguments are required. We fix $\lambda>0$ and assume that $\|b\|_{\text{BMO}_\gamma(\rho)}=1$, that is,
there exists $\gamma>0$ such that the inequality
\[\left(1+\frac{r}{\rho(x)}\right)^{-\gamma}\frac{1}{|Q|}\int_Q|f-f_Q|\leq 1\]
holds for every cube $Q=Q(x,r)$.

Since $u^{s'}\in A_1^\rho$ implies that $u\in A_1^{\rho}$, by combining items~\eqref{item: lema: propiedades de Ap,rho - item b} and~\eqref{item: lema: propiedades de Ap,rho - item c} in Lemma~\ref{lema: propiedades de Ap,rho} we get that  $v\in A_\infty^{\rho}$. Then we can perform the Calderón-Zygmund decomposition of $|f|$ at level $\lambda$ with respect to the measure \mbox{$d\mu(x)=v(x)\,dx$} given by Lemma~\ref{lema: CZ decomposition}, with $\theta$ sufficiently large to be chosen later, obtaining a sequence of cubes $\{Q_j\}_j$, $Q_j=Q(x_j,r_j)$ such that 
\begin{equation}\label{eq: teo: mixta para SCZO s,delta - promedios de CZ como lambda}
    \left(1+\frac{r_j}{\rho(x_j)}\right)^{\theta}\lambda \leq \frac{1}{v(Q_j)}\int_{Q_j}|f|v\leq C \left(1+\frac{r_j}{\rho(x_j)}\right)^{\sigma}\lambda
\end{equation}
for some $C>0$, $\sigma\geq \theta$ and every $j$. Performing the same decomposition as in Theorem~\ref{teo: mixta para SCZO inf,delta}, we can write
\begin{align*}
    uv\left(\left\{x\in\mathbb{R}^n: \frac{|T_b(fv)(x)|}{v(x)}>\lambda\right\}\right)&\leq  uv\left(\left\{x: \frac{|T_b(gv)(x)|}{v(x)}>\frac{\lambda}{3}\right\}\right)+uv(\tilde\Omega)\\
    &\quad + uv\left(\left\{x\in\mathbb{R}^n\setminus \tilde\Omega: \frac{|T_b(h_1v)(x)|}{v(x)}>\frac{\lambda}{3}\right\}\right)\\
    &\quad + uv\left(\left\{x\in\mathbb{R}^n\setminus \tilde\Omega: \frac{|T_b(h_2v)(x)|}{v(x)}>\frac{\lambda}{3}\right\}\right)\\
    &=A+B+C+D.
\end{align*}

In order to estimate $A$, our hypotheses on the weights $u$ and $v$ allow us to conclude that $u\in A_1^{\rho}$ and $v\in A_{q'}^\rho(u)$, for some $q'>1$. By Remark~\ref{obs: v en Ap implica v^{1-p'} en Ap'}  and item~\eqref{item: lema: propiedades de Ap,rho - item a} of Lemma~\ref{lema: propiedades de Ap,rho}, we have that $uv^{1-q}\in A_q^\rho$. From Tchebycheff inequality and Theorem~\ref{teo: tipo fuerte del conmutador (s,delta)} we get
\[ A\leq \frac{3^q}{\lambda^q}\int_{\mathbb{R}^n}|T_b(gv)|^quv^{1-q}
    \leq \frac{C}{\lambda^q}\int_{\mathbb{R}^n}g^quv\leq \frac{C}{\lambda}\int_{\mathbb{R}^n}guv,\]
    because of the fact that $g(x)\leq C\lambda$ since the cubes $Q_j$ for $j\in J_1$ are subcritical and $|f(x)|\le \lambda$ for $x\notin \Omega$. Now the desired estimate for $A$ can be achieved by following the same argument as in page~\pageref{pag: estimacion de A}. 

    The estimate of $B$ follows similarly as in page~\pageref{pag: estimacion de B}, whenever we pick $\theta>\theta_1+\theta_2$,  where $\theta_1$ is provided by Lemma~\ref{lema: lema fundamental} and $\theta_2$ is such that the inequality
    \[uv(2Q)\leq Cuv(Q)\left(1+\frac{r}{\rho(x)}\right)^{\theta_2}\]
    holds for every cube $Q=Q(x,r)$.

The estimation of $C$ can be performed as follows
\[\frac{T_b(h_1v)}{v}=\sum_{j\in J_1}\frac{T_b(h_1^jv)}{v}=\sum_{j\in J_1}\frac{(b-b_{Q_j})T(h_1^jv)}{v}-\sum_{j\in J_1}\frac{T((b-b_{Q_j})h_1^jv)}{v},\]
so
\begin{align*}
    C&\leq uv\left(\left\{x\in\mathbb{R}^n\setminus \tilde\Omega: \left|\sum_{j\in J_1}\frac{(b-b_{Q_j})T(h_1^jv)}{v} \right|>\frac{\lambda}{6}\right\}\right)\\
    &\qquad +uv\left(\left\{x\in\mathbb{R}^n\setminus \tilde\Omega: \left|\sum_{j\in J_1}\frac{T((b-b_{Q_j})h_1^jv)}{v} \right|>\frac{\lambda}{6}\right\}\right)\\
    &=C_1+C_2.
\end{align*}
Proceeding as in page~\pageref{pag: estimacion de C_1} we obtain
\begin{align*}
  C_1&\leq \frac{C}{\lambda}\sum_{j\in J_1}\int_{Q_j}|h_1^j(y)|\,v(y)\left(\int_{\mathbb{R}^n\setminus \tilde Q_j}|b(x)-b_{Q_j}||K(x,y)-K(x,x_j)|\,u(x)\,dx\right)\,dy\\
  &\leq \frac{C}{\lambda}\sum_{j\in J_1}\int_{Q_j}|h_1^j(y)|\,v(y)F_j(y)\,dy.
\end{align*}
Fix $j\in J_1$, $0<\tilde\delta<\delta$ and $N>0$ to be chosen later. Since $B_j=B(x_j, 2r_j)\subseteq \tilde Q_j$, by Hölder inequality and Proposition~\ref{propo: suavidad mas fuerte (s,delta)} we have that 
\begin{align*}
    F_j(y)&\leq \sum_{k=1}^\infty \left(\int_{2^{k+1}B_j\setminus 2^kB_j}|b(x)-b_{Q_j}|^{s'}u^{s'}(x)\,dx\right)^{1/s'}\left(\int_{2^{k+1}B_j\setminus 2^kB_j}|K(x,y)-K(x,x_j)|^{s}\,dx\right)^{1/s}\\
    &\leq C\sum_{k=1}^\infty \left(\int_{2^{k+2}Q_j}|b(x)-b_{Q_j}|^{s'}u^{s'}(x)\,dx\right)^{1/s'}(2^{k+1}r_j)^{-n/s'}2^{-(k+1)\tilde\delta}\left(1+\frac{2^{k+1}r_j}{\rho(x_j)}\right)^{-N}.
\end{align*}
Notice that
\begin{align*}
  \int_{2^{k+2}Q_j}|b(x)-b_{Q_j}|^{s'}u^{s'}(x)\,dx&\leq 2^{s'}\left(\int_{2^{k+2}Q_j}|b-b_{2^{k+2}Q_j}|^{s'}u^{s'}+\int_{2^{k+2}Q_j}|b_{2^{k+2}Q_j}-b_{Q_j}|^{s'}u^{s'}\right)\\
  &=I+II.  
\end{align*}
\refstepcounter{BPR}\label{pag: estimacion de I y II}
For $I$, notice that $u^{s'}\in A_1^{\rho}$ implies that there exist $t>1$ and $\theta_3,\theta_4\geq 0$ such that $u^{s'}\in A_1^{\rho,\theta_4}\cap \text{RH}_t^{\rho,\theta_3}$. By applying again Hölder inequality and using Proposition~\ref{propo: BM0 equivalente a BMOp} we get
\begin{align*}
  I&\leq C|2^k Q_j|\left(\frac{1}{|2^{k+2}Q_j|}\int_{2^{k+2}Q_j}|b-b_{2^{k+2}Q_j}|^{s't'}\right)^{1/t'}\left(\frac{1}{|2^{k+2}Q_j|}\int_{2^{k+2}Q_j}u^{s't}\right)^{1/t} \\
  &\leq C\left[u^{s'}\right]_{\text{RH}_t^{\rho,\theta_3}}|2^k Q_j|\left(1+\frac{2^{k+2}r_j}{\rho(x_j)}\right)^{(N_0+1)\gamma s'+\theta_3}\left(\frac{1}{|2^{k+2}Q_j|}\int_{2^{k+2}Q_j}u^{s'}\right)\\
&\leq C\left[u^{s'}\right]_{\text{RH}_t^{\rho,\theta_3}}\left[u^{s'}\right]_{A_1^{\rho,\theta_4}}|2^k Q_j|\left(1+\frac{2^{k+2}r_j}{\rho(x_j)}\right)^{(N_0+1)\gamma s'+\theta_3+\theta_4}\inf_{2^{k+2}Q_j}u^{s'}\\
&=C (2^kr_j)^n\left(1+\frac{2^{k}r_j}{\rho(x_j)}\right)^{(N_0+1)\gamma s'+\theta_3+\theta_4}u^{s'}(y),
\end{align*}
for every $y\in Q_j$. 

To estimate $II$ we apply Lemma~\ref{lema: diferencia de promedios acotada por norma BMO} in order to get
\begin{align*}
    II&\leq C(k+2)\left(1+\frac{2^{k+2}r_j}{\rho(x_j)}\right)^\gamma \int_{2^{k+2}Q_j}u^{s'}\\
    &\leq C\left[u^{s'}\right]_{A_1^{\rho,\theta_4}}(2^k r_j)^n k\left(1+\frac{2^{k+2}r_j}{\rho(x_j)}\right)^{\gamma+\theta_4}\inf_{2^{k+1}Q_j}u^{s'}\\
    &\leq C k (2^kr_j)^n\left(1+\frac{2^{k}r_j}{\rho(x_j)}\right)^{\gamma+\theta_4}u^{s'}(y).
\end{align*}
Therefore, plugging the bounds for $I$ and $II$ in the estimate for $F_j(y)$ we get
\begin{align*}
    F_j(y)&\leq Cu(y)\sum_{k=1}^\infty (2^kr_j)^{n/s'}\left(1+\frac{2^{k}r_j}{\rho(x_j)}\right)^{(N_0+1)\gamma+(\theta_3+\theta_4)/s'-N}k^{1/s'} (2^kr_j)^{-n/s'}2^{-k\tilde\delta}\\
    &\leq Cu(y),
\end{align*}
provided we choose $N$ sufficiently large such that $N>(N_0+1)\gamma+(\theta_3+\theta_4)/s'$. Consequently, we obtain that
\[C_1\leq \frac{C}{\lambda}\sum_{j\in J_1}\int_{Q_j}|h_1^j(y)|\,u(y)v(y)\,dy.\]
From now on, the desired estimate is obtained as in page~\pageref{pag: estimacion de C_1 (2)}. 

To take care of $C_2$, we observe that
\[C_2=uv\left(\left\{x\in\mathbb{R}^n\setminus \tilde\Omega: \left|\frac{T(\sum_{j\in J_1}(b-b_{Q_j})h_1^jv)}{v} \right|>\frac{\lambda}{6}\right\}\right).\]
By Theorem~\ref{teo: mixta para T SCZO s, delta} we obtain
\[C_2\leq \frac{C}{\lambda}\int_{\mathbb{R}^n}\left|\sum_{j\in J_1}(b(x)-b_{Q_j})\,h_1^j(x)\right|u(x)v(x)\,dx,\]
and the estimate can be continued exactly as in page~\pageref{pag: estimacion de C_2}.

Finally, applying Tchebycheff inequality for $D$ yields
\begin{align*}
    D&\leq \frac{3}{\lambda} \sum_{j\in J_2}\int_{\mathbb{R}^n\setminus \tilde Q_j}|b(x)-b_{Q_j}|\left(\int_{Q_j}|K(x,y)f(y)|\,v(y)\,dy\right) u(x)\,dx\\
    &\quad +\frac{3}{\lambda}\sum_{j\in J_2} \int_{\mathbb{R}^n\setminus \tilde Q_j}\left(\int_{Q_j}|(b(y)-b_{Q_j})K(x,y)f(y)|\,v(y)\,dy\right) u(x)\,dx\\
    &=\frac{3}{\lambda}(D_1+D_2).
\end{align*}
In order to estimate $D_1$ we interchange the order of integration to get
\[D_1\leq\sum_{j\in J_2} \int_{Q_j}|f(y)|\,v(y)\left(\int_{\mathbb{R}^n\setminus \tilde Q_j}|b(x)-b_{Q_j}||K(x,y)|\,u(x)\,dx\right)\,dy.\]
Denoting again $B_j=B(x_j,2r_j)$, by Hölder inequality together with the size condition~\eqref{TamHorm} we obtain
\begin{align*}
    \int_{\mathbb{R}^n\setminus \tilde Q_j}|b(x)-b_{Q_j}||K(x,y)|\,u(x)\,dx&\leq \sum_{k=1}^\infty \left(\int_{2^{k+1}B_j\setminus 2^kB_j}|K(x,y)|^s\,dx\right)^{1/s}\\
    &\qquad \times \left(\int_{2^{k+1}B_j\setminus 2^kB_j} |b(x)-b_{Q_j}|^{s'}u^{s'}(x)\,dx\right)^{1/s'}\\
    &\leq C_N\sum_{k=1}^\infty (2^{k+1}r_j)^{-n/s'}\left(1+\frac{2^{k+1}r_j}{\rho(x_j)}\right)^{-N}\left(\int_{2^{k+2}Q_j}|b-b_{Q_j}|^{s'}u^{s'}\right)^{1/s'}
\end{align*}
where $N>0$ will be chosen later. By following the same estimate for $I$ and $II$ in page~\pageref{pag: estimacion de I y II} we deduce that
\[\left(\int_{2^{k+2}Q_j}|b-b_{Q_j}|^{s'}u^{s'}\right)^{1/s'}\leq C (2^kr_j)^{n/s'}\left(1+\frac{2^{k}r_j}{\rho(x_j)}\right)^{(N_0+1)\gamma+(\theta_3+\theta_4)/s'}k^{1/s'}u(y),\]
for every $y\in Q_j$. So
\begin{align*}
  \int_{\mathbb{R}^n\setminus \tilde Q_j}|b(x)-b_{Q_j}||K(x,y)|\,u(x)\,dx&\leq C_N u(y)\sum_{k=1}^\infty 2^{-k(N-(N_0+1)\gamma-(\theta_3+\theta_4)/s')}k^{1/s'}\\
  &\leq C u(y),
\end{align*}
since $r_j>\rho(x_j)$ for $j\in J_2$ and
$N$ verifies $N>(N_0+1)\gamma+(\theta_3+\theta_4)/s'$. Therefore,
\[D_1\leq C\sum_{j\in J_2}\int_{Q_j}|f(y)|\,u(y)v(y)\,dy\leq C\int_{\mathbb{R}^n}|f(y)|\,u(y)v(y)\,dy.\]
For $D_2$ we proceed similarly, that is
\[D_2\leq \int_{Q_j}|b(y)-b_{Q_j}||f(y)|\,v(y)\left(\int_{\mathbb{R}^n\setminus \tilde Q_j}|K(x,y)|\,u(x)\,dx\right)\,dy.\]
Performing Hölder inequality and using the size condition~\eqref{TamHorm} we obtain
\begin{align*}
    \int_{\mathbb{R}^n\setminus \tilde Q_j}|K(x,y)|\,u(x)\,dx&\leq \sum_{k=1}^\infty \left(\int_{2^{k+1}B_j\setminus 2^kB_j}|K(x,y)|^s\,dx\right)^{1/s} \left(\int_{2^{k+1}B_j\setminus 2^kB_j} u^{s'}(x)\,dx\right)^{1/s'}\\
    &\leq C_N\sum_{k=1}^\infty (2^{k+1}r_j)^{-n/s'}\left(1+\frac{2^{k+1}r_j}{\rho(x_j)}\right)^{-N}\left(\int_{2^{k+2}Q_j}u^{s'}\right)^{1/s'}\\
    &\leq C_N\left[u^{s'}\right]_{A_1^{\rho,\theta_4}}^{1/s'}\sum_{k=1}^\infty \left(1+\frac{2^{k+1}r_j}{\rho(x_j)}\right)^{-N+\theta_4/s'}u(y)\\
    &\leq C\, u(y)\left(1+\frac{r_j}{\rho(x_j}\right)^{-N/2}\sum_{k=1}^\infty 2^{-k(N/2-\theta_4/s')}\\
    &\leq C\, u(y)\left(1+\frac{r_j}{\rho(x_j}\right)^{-N/2},
\end{align*}
whenever $N>2\theta_4/s'$. This allows us to conclude that
\[D_2\leq C\sum_{j\in J_2}\left(1+\frac{r_j}{\rho(x_j)}\right)^{-N/2}\int_{Q_j}|b(y)-b_{Q_j}||f(y)|\,u(y)v(y).\]
From this estimate and proceeding as we did in page~\pageref{pag: estimacion para D2}, we get the desired inequality provided $N$ is chosen sufficiently large.
\end{proof}

\def\cprime{$'$}
\providecommand{\bysame}{\leavevmode\hbox to3em{\hrulefill}\thinspace}
\providecommand{\MR}{\relax\ifhmode\unskip\space\fi MR }
\providecommand{\MRhref}[2]{%
  \href{http://www.ams.org/mathscinet-getitem?mr=#1}{#2}
}
\providecommand{\href}[2]{#2}

\end{document}